\definecolor{refkey}{rgb}{1,0,0}
\definecolor{labelkey}{rgb}{0,0,1}  
 \newtheorem{theorem}[subsubsection]{Theorem}
\newtheorem{lemma}[subsubsection]{Lemma}
\newtheorem{proposition}[subsubsection]{Proposition}
\theoremstyle{definition}
\newtheorem{example}[subsubsection]{Example}
\newtheorem{definition}[subsubsection]{Definition}
\newtheorem{remarks}[subsubsection]{Remarks}
\newtheorem{questions}[subsubsection]{Questions}
\def\to{\longrightarrow}
\def\Diff{\mathscr D}
\def\str{{\sf str}}
\def\Hom{{\rm Hom}}
 \def\Spf{{\sf Spf}}
 \newcommand{\surj}{\twoheadrightarrow}
\newcommand{\inj}{\hookrightarrow}
\def\id{{\sf id}}
 \def\im{{\sf im}}
 \def\ker{{\sf ker}}
\def\mod{{\sf Mod}}
\def\modo{{\sf Mod}^\text{o}}
\def\modf{{\sf Mod}_{\rm f}}
\def\repf{{\sf Rep}_{\rm f}}
\def\repo{{\sf Rep}^{\rm o}}
\def\comodf{{\sf Comod}_{\rm f}}
\def\comod{{\sf Comod}}
\def\Comod{\comod}
\def\Mod{\mod}
\def\comodo{{\sf Comod}^{\rm o}}
\def\coend{{\sf Coend}}
\begin{document}
\title{Tannakian duality over Dedekind rings and applications}
\author{Nguyen Dai Duong}
 
\email{nguyendaiduongqn@yahoo.com.vn}
\author{Ph\`ung H\^o Hai}
\email{phung@math.ac.vn }

\address[Nguyen Dai Duong \& Ph\`ung H\^o Hai]{Institute of  Mathematics,  Vietnam Academy of Science and Technology, Hanoi, Vietnam}

\dedicatory{Dedicated to H\'el\`ene Esnault, with admiration and affection}

\thanks{This research is funded by Vietnam National Foundation for  Science and Technology Development (NAFOSTED) under grant number 101.04-2016.19. Part of this work has been carried out when the second named author was visiting the Vietnam Institute for Advanced Study in Mathematics.}
\subjclass[2010]{14L17,14F10}
\begin{abstract} 
We establish a duality between flat affine group schemes and rigid tensor categories equipped with a 
neutral fiber functor (called Tannakian lattice), both defined over a Dedekind ring. We use this duality and the known Tannakian duality due to Saavedra to 
study morphisms between flat affine group schemes. Next, we apply our new duality to the   
category of stratified sheaves on a smooth scheme over a Dedekind ring $R$ to define the relative differential fundamental group scheme of the given scheme and compare the fibers of this group scheme with the fundamental group scheme of the fibers. When $R$ is a complete DVR of equal 
characteristic we show that this category is Tannakian in the sense of Saavedra.
\end{abstract}

\maketitle
\parskip5pt

\section*{Introduction}

Tannakian duality for group scheme over a field was studied by Saavedra \cite{SR72}. The duality in the neutral case, as shown by Saavedra, is a dictionary between $k$-linear abelian rigid tensor categories equipped with a fiber functor to the category of $k$-vector spaces and affine group schemes over $k$. The duality consists of two parts:
\begin{itemize}
\item[-] The reconstruction theorem which recovers a group scheme from a neutral Tannakian category $(\mathcal T,\omega:\mathcal T\to {\sf Vect}(k))$, as the group of automorphisms of $\omega$ preserving the tensor product, the Tannakian group of $(\mathcal T,\omega)$.
\item[-] The presentation (or description) theorem which claims the equivalence between the original category $\mathcal T$ and the representation category of the Tannakian group of $\mathcal T$.\end{itemize}

Saavedra also extended this result to the non-neutral case - when the fiber functor goes to a more general category of coherent sheaves over a $k$-scheme. A complete proof of this theorem was given by Deligne in \cite{DP}.

An important application of Tannakian duality is to define various fundamental group schemes.
Let $X$ be a scheme over a field $k$. There are certain abelian tensor categories associated to $X$. 
For example, if $k$ is perfect, $X$ is reduced and connected, M. Nori introduced the category of essentially finite bundles; if $X$ is smooth and $k$ has characteristic zero, one has the category of flat connections on $X$; 
if $k$ has positive characteristic, one has the category of stratified bundles (i.e. $\mathcal O_X$-coherent modules equipped with the action of the sheaf $\Diff_{X/k}$ of algebras of differential operators on $X$). 
Given a $k$-rational point $x$ of $X$, the functor taking fibers at $x$ makes the above categories Tannakian category and Tannakian duality yields the corresponding affine group scheme, which is usually called the fundamental group scheme of $X$. Tannakian duality is also used as an alternative approach to the Picard-Vessiot theory of linear differential equations.

Let now $f:X\to S:={\sf Spec} R$  be a smooth morphism, where $R$ is a Dedekind ring. We are interested in the category of modules over  $\Diff(X/S)$, which are coherent as 
$\mathcal O_X$-modules, where $\Diff(X/S)$ denotes the sheaf of algebras 
of differential operators on $X/S$. Such a sheaf will be 
called stratified sheaf over $X/S$. The category ${\sf str}(X/S)$ of stratified sheaves on $X/S$ is an abelian tensor 
category, in which an object is rigid if it is locally free as an 
$\mathcal O_X$-module. Assume that $f$ admits a section $\xi:S\to X$. Then the functor  $\xi^*$ 
provides us a fiber functor for ${\sf str}(X/S)$. It is then 
natural to ask, if there exists a generalization of Tannakian duality to this case.

In fact, Tannakian duality over Dedekind rings has already been considered by Saavedra in \cite[II.2]{SR72}. Saavedra gave a condition for 
an abelian category equipped with an exact faithful  functor 
to the module category (over a given Noetherian ring) to be 
equivalent to the comodule category over the coalgebra 
reconstructed from this functor.  This duality is then
developed to a Tannakian duality for flat affine group 
schemes over a Dedekind ring.

In many examples, the involving category may not
satisfy all the properties in Saavedra's definition.  
One therefore wants to reconstruct an affine group 
scheme from a smaller part of its representation category, for instance, from rigid representations (i.e. representations in finite projective modules). 
There are at least two approaches to this problem, one by  Wedhorn \cite{We}, and the other by Brugui\'eres \cite{Br} following an idea of M. Nori, the latter one has been used by dos Santos \cite{JSa} to define the Galois differential groups of a relative stratified bundle. 
Wedhorn's approach is similar to Saavedra's approach. He introduced the notion of Tannakian lattice and reconstructed a group scheme from such a lattice. 
However Wedhorn has not 
established a full duality; he did not show that a Tannakian lattice is equivalent to the category of finite projective representations of a flat affine group scheme. 

In Section \ref{Sect1} we first recall the definition of the 
representation category of a flat affine group scheme over a Dedekind ring. 
A special property of such a category, as 
noticed by Serre \cite{Se}, is that any object can be 
represented as a quotient of a projective one (over the base ring). Saavedra uses this feature to characterize 
these representation categories (Theorem \ref{thSa}). A self-contained proof of 
this theorem will be given in the Appendix. 

In Section 
\ref{Sect2} we 
propose a new definition of neutral Tannakian lattices.
Saavedra's method is 
used to establish a full 
duality between neutral Tannakian lattices and flat affine group 
schemes over a Dedekind ring (Theorem \ref{main}). 
As a corollary we show that a Tannakian lattice is contained in a unique (up to equivalence) abelian envelope (Proposition \ref{torsor}). We also construct a torsor of isomorphisms between two fiber functors.

The Tannakian duality is used in Section \ref{Sect3} to 
study properties of homomorphisms of flat coalgebras over a Dedekind ring. 
 We first introduce the 
notions of special coalgebra homomorphisms and special subcoalgebras (Definition \ref{def_special}). For coalgebras over a field, one has the local 
finiteness property: each coalgebra is the 
union of its finite dimensional subcoalgebras. The situation 
becomes more complicated for flat coalgebras over 
a Dedekind ring. A flat coalgebra over a Dedekind ring is still 
the union of its finite subcoalgebras, but one cannot choose the subcoalgebras to be at the same time finite and saturated as submodules. We introduced the notion of special locally finite coalgebras over a Dedekind ring, which are those representable as union of finite saturated subcoalgebras. We show that the coordinate ring of a flat group scheme, the generic fiber of which is  reduced and connected,  is specially locally finite (Proposition 
\ref{smooth_lf}).  
In the second part of this section we provide conditions for a coalgebra homomorphism to be (specially) injective or surjective (Propostions \ref{hom_inj}, 
\ref{hom_sur}).

In Section \ref{Sect4} the results on homomorphisms of flat coalgebras are used to study 
homomorphisms of flat group schemes: to characterize
closed immersions and the faithful flatness.
In the last part of Section \ref{Sect4},  we give a criterion for the exactness of sequences of 
homomorphisms of flat affine group schemes over Dedekind 
rings in terms of Tannakian duality. Such a criterion for group schemes over a field has 
proved to be a useful tool, see \cite{EH06, EHS08, EH08, Hai13}. 
Some applications of the results in this section will be presented in a subsequent 
paper \cite{DHS}.

In Section \ref{Sect5} we  apply the Tannakian duality to the 
category $\str(X/R)$ of stratified sheaves over a smooth scheme $X/R$, 
where $R$ is a Dedekind ring. We show that the subcategory $\str^{\rm o}(X/R)$ of relative stratified bundles is a Tannakian lattice (assuming the existence of an $R$-point of $X$). We then compare the fibers of the relative fundamental group and the fundamental group of the fibers.
When $R$ is a complete local 
discrete valuation ring of equal characteristics, we show 
that the category of all stratified sheaves $\str(X/R)$ is a Tannakian category over $R$.  

In the Appendix, for the sake of the reader, we recall Saavedra's proof of Tannakian duality for flat affine group schemes over a Dedekind ring.

\section{Preliminaries}\label{Sect1}
 In this article $R$ will denote a {\em Dedekind ring}. The fraction field of $R$ is denoted by $K$, a residue field will be denoted by $k$. 
 The category of $R$-modules is denoted by ${\rm Mod}(R)$, 
its full subcategory of finite modules is denoted by ${\rm Mod}_\text{f}(R)$ and the full subcategory of finite projective modules is denoted by ${\rm Mod}^\text{o}(R)$. 
We shall intensively use the facts that over a Dedekind ring, torsion free modules are  flat and  finitely generated flat  modules are projective.
 The  tensor product of $R$-modules, when not explicitly indicated, is understood as the 
 tensor product over $R$.

\subsection{Flat affine group schemes} 
Let $G$ be a flat affine group scheme over $R$. The coordinate ring of $G$, $R[G]$, is usually denoted by $L$ for short. Thus $L$ is an $R$-flat (commutative) Hopf algebra.  

\subsubsection{}By definition, a $G$-{\em module} (or a $G$-{\em representation}) is the same as  a {\em right} $L$-comodule, i.e., an $R$-module $M$ equipped with a coaction of $L$:
$$\rho:M\to M\otimes_RL;\quad (\rho\otimes \id)\rho=(\id\otimes\Delta)\rho,\quad (\id\otimes\varepsilon)\rho=\id,$$
where $\Delta:L\to L\otimes L$ denotes the coproduct and 
$\varepsilon:L\to R$ denotes the counit of $L$.  
 
The flatness of $L$ implies that the category $\comod(L)$ of (right)
$R$-modules equipped with a coaction of $L$ is an $R$-
linear abelian category. We call an $L$-comodule $M$ {\em 
finite} if it is finite as an $R$-module and {\em finite 
projective} if it is also projective over $R$. The full 
subcategory of finite $L$-comodules is denoted by  
$\comod_f(L)$ and the full subcategory of finite projective 
$L$-comodules is denoted by $\comod^o(L)$. 
Each $L$-comodule is the union of its finite subcomodules (see Appendix).

\subsubsection{} $\comod(L)$ is a tensor category with respect to the tensor product over $R$. The unit object is  $R$ equipped with the trivial coaction of $L$: $R\to R\otimes L=L$, $1\mapsto 1\otimes 1$. 
The subcategory $\comod^o(L)$ is rigid, i.e., each objects possesses a dual.
\subsubsection{}
More general, a comodule $J$ is said to be trivial if the 
coaction maps any element $m$ to $m\otimes 1\in J\otimes L$. A finite trivial comodule is thus a quotient 
of the (trivial) comodule $R^n$. By taking  duality we see that a rigid trivial
comodule is a subcomodule of $R^n$. For a  
comodule $V$, the maximal trivial subcomodule $V^{\rm triv}$ of  consists of elements $v$ such that 
$\rho(v)=v\otimes 1$.

\subsubsection{}\label{sect_cf}Let $V$ be a finite projective $L$-comodule. The coaction $\rho:V\to V\otimes L$ induces a map
$${\sf Cf}: V^\vee\otimes V\to L, \quad \varphi\otimes m\mapsto \sum\varphi(m_i)m'_i, \quad \varphi\in V^\vee, m\in V, \Delta(m)=\sum_im_i\otimes m_i'.$$  
This map can be considered either as a homomorphism of $L$-comodules, 
where $L$ coacts on itself by the coproduct and coacts on $V^\vee\otimes V$ by the action on the second tensor component, or as a homomorphism of coalgebras. 
The image of this map, denoted by ${\sf Cf}(V)$, is called the coefficient space of $V$. 
Since $L$ is flat, it is a subcoalgebra of $L$, i.e., $\Delta({\sf Cf}(V))\subset {\sf Cf}(V)\otimes {\sf Cf}(V)$ (cf. Lemma \ref{lem_intersect}~(i)). 
The coaction of $L$ on $V$ factors through the coaction of ${\sf Cf}(V)$ on $V$.

We note that our requirement here for a subcoalgebra is weaker than in some other literatures, e.g. \cite{Ha}, where a subcoalgebra is a special subcoalgebra in our sense (see. \ref{def_special} for definition).

\subsubsection{}\label{s.104}
Following dos Santos, we call a subcomodule $U$ of $V$ {\em special} if $U$ is saturated in $V$, i.e., $V/U$ is $R$-flat. For instance, the coaction $V\to V\otimes L$ can be considered as a comodule map (where $L$ coacts on the target by the coaction on itself) . It splits as an $R$-module map by means of the counit $\varepsilon:L\to R$ of $L$. Hence, if $V$ is $R$-flat, $V$ is a special submodule of $V\otimes L$.

If $V$ is finite projective, then ${\sf Cf}(V)$ contains both 
${\sf Cf}(U)$ and ${\sf Cf}(V/U)$, \cite[Lem.~9]{JSa}. 
In deed, this inclusion property is a local property, so we can assume that $R$ is a DVR, then $V, U$ and $V/U$ are all free over $R$. Hence we can choose a basis $(e_1,e_2,\ldots,e_n)$ of $V$ such that the first $m$ elements form a basis of $U$ and the cosets of the other elements form a basis of $V/U$. The coaction of $L$ on $V$ with respect to this basis is given in terms of a multiplicative matrix $(a^j_i)$:
$$\rho(e_i)=\sum_{j=1}^n e_j\otimes a^j_i,\quad i=1,\ldots,n.$$
Notice that the elements $a^j_i$ are uniquely determined by the basis elemnents $e_i$'s.
Now the assumption that $U$ is a subcomodule implies that $a^j_i=0$ for $i=1,\ldots,m$ and $j=m+1,\ldots, n$. It follows that the coaction of $L$ on $V/U$ with respect to the basis $(\overline{e_{m+1}},\ldots,\overline{e_n})$ is given by
$$\rho(\overline{e_k})=\sum_{l=m+1}^n
\overline{e_l}\otimes a^l_k,\quad k=m+1,\ldots,n.$$ 

For example, for an $R$-flat comodule $V$, the maximal trivial subcomodule $V^L$ of $V$ is special. 
Indeed, if we have $0\neq v=au$, $a\in R$, $u\in V$ then from the equality $\rho(au)=au\otimes 1$ 
and the flatness of $V\otimes L$, we conclude that 
$\rho(u)=u\otimes 1$. Thus $V/V^L$ is torsion 
free, hence flat. This implies that for any two finite projective
comodules $V,W$, the inclusion
$$\Hom^L(V,W)\to \Hom_R(V,W)$$
is saturated. Indeed, we have $$\Hom^L(V,W)\cong \Hom^L(R,W\otimes V^\vee)=(W\otimes V^\vee)^L\subset
W\otimes V^\vee.$$

\subsubsection{} For a finite comodule $V$, we denote by $\langle V\rangle$ the full subcategory generated by $V$, i.e., consisting of subquotients of finite direct sums of $V$.
 For a finite projective comodule $V$, denote by $\langle V\rangle_s$ the  full subcategory specially generated by $V$, i.e. consisting of special subquotients (quotients of special subobjects or special subobjects of quotients objects) of direct sums of copies of $V$.

\begin{lemma}\label{lem.106} Let $V$ be a finite projective comodule over a flat $R$-coalgebra $L$.
Then the category $\langle V\rangle_s$ is equivalent with $\comod^o({\sf Cf}(V))$ (by means of the obvious functor which is the identity functor on the underlying $R$-modules).
\end{lemma}
\begin{proof}Consider the restriction functor $\comod^o({\sf Cf}(V))\to \comod^o(L)$.   
The condition for a map $\varphi:M\to N$ to be  an $L$-comodule map reads as follows: the map
$\rho_N \varphi- (\varphi\otimes\id)\rho_M:M\to N\otimes L$ is the zero map (i.e. the outer square in the diagram below is commutative).
$$\xymatrix{
M\ar[r]^{\rho_M\quad }\ar[d]_\varphi	&M\otimes {\sf Cf}(V)\ar[r]  \ar[d]_{\varphi\otimes\id}	& M\otimes L\ar[d]^{\varphi\otimes\id}\\
N\ar[r]_{\rho_N\quad }	&N\otimes {\sf Cf}(V)\ar[r] 		 & N\otimes L.}$$
  If $N$  is flat over $R$, the horizontal map $N\otimes {\sf Cf}(V)\to N\otimes L$ in the above diagram is injective. Hence the condition for $\varphi$ to be an $L$-comodule map is the same as the condition for $\varphi$ to be a ${\sf Cf}(V)$-comodule map (which amounts to the left square to commute). Thus the restriction functor  $\comod^o({\sf Cf}(V))\to \comod^o(L)$ is fully faithful.

On the other hand, 
according to \ref{s.104}, if $W\in \langle V\rangle_s$ then ${\sf Cf}(W)\subset{\sf Cf}(V)$, 
hence $W$ is a subcomodule of ${\sf Cf}(V)$. Thus, it remains 
to show that any finite projective comodule over ${\sf Cf}(V)$ is a special subquotient of a finite direct sum of copies of $V$.

This claim holds for ${\sf Cf}(V)$ itself, as, by definition, ${\sf Cf}(V)$ is a quotient of $V^\vee\otimes V$, where 
${\sf Cf}(V)$ coacts on $V^\vee\otimes V$ by the coaction 
on the second tensor component. Further, if $M$ is a finite projective  ${\sf Cf}(V)$-comodule then  $M$ is a special subcomodule 
of $M\otimes{\sf Cf}(V)$ (cf. \ref{s.104}),  i.e. it is a special subquotient of a direct sum of copies of $V$.
The proof is complete.\end{proof}

\subsubsection{Warning} The functor $\comod_f({\sf Cf}(V))\to \comod(L)$ is faithful and exact but generally not full, see Section \ref{Sect3} below. It is not clear to us how to specify the image of this functor.

\subsubsection{}For a comodule $M$, its ($R$-) torsion part $M^{\rm tor}$ is also a subcomodule. The quotient $M/M^{\rm tor}$ is $R$-torsion free, hence flat, hence $R$-projective if it is finite over $R$.

\subsubsection{}$L$-comodules are locally finite, i.e. they are union of their subcomodules of finite type. In fact, for each finite set $S$ of a comodule $M$, there exists a subcomodule $N$, finitely generated over $R$, which contains $S$. It follows that $L$ itself is the union of its subcoalgebras, which are finite over $R$, c.f. \cite[Cor.~2]{Se}. See also Appendix \ref{ex_comod}.

\subsubsection{}Let $K$ be the fractions field of $R$. Denote $L_K:=L\otimes_RK$. Then $L_K$ is a Hopf algebra over the field $K$. If $V$ is a comodule over $L$, then $V_K:=V\otimes_RK$ is a comodule over $L_K$. For any two finite projective comodules $V, W$, the natural map
$$\Hom^L(V,W)\otimes_RK\to \Hom^{L_K}(V_K,W_K)$$
is an isomorphism. Indeed, if $f\in \Hom^{L_K}(V_K,W_K)$, then there exists $0\neq a\in R$ such that $af:V\to W$. But then we have $f=af\otimes a^{-1}.$

Conversely, let $X$ be a finite dimensional comodule   
over $L_K$. Since $X\otimes_K L_K\cong X\otimes_RL$, $X$ is an $L$-comodule. Let $(e_1,\ldots,e_n)$ be a basis of 
$X$. Then there exists a finite $L$-comodule $V\subset X$, 
which contains $(e_i,\ldots,e_n)$. 
Now $V$ is finite hence projective 
over $R$. As $V$ contains a basis of $X$, we have $V_K\cong X$. Note that $V$ is not unique, but all such $V$ has the same rank, which is the dimension of $X$ over $K$.

\subsection{Tannakian duality for abelian tensor categories}

\begin{definition}[Subcategory of definition, {cf. \cite[II.2.2]{SR72}}]\label{dominating}
Let $\mathcal{C}$ be an $R$-linear abelian category, and 
$\omega: \mathcal{C} \longrightarrow $ $\modf(R)$  be an 
$R$-linear exact faithful functor. Suppose that 
$\mathcal{C}^{\rm o}$ is a full subcategory of $\mathcal C$ such that: 
\begin{itemize}
\item[(i)] for any object  $X \in \mathcal{C}^{o}$, $\omega(X)$ is a finitely generated  projective  $R$-module;
\item[(ii)] every object of $\mathcal{C}$ is a quotient of  an object of  $\mathcal{C}^{\rm o}.$ \end{itemize}
Then $\mathcal{C}^{\rm o}$ is  called a {\em subcategory of definition} of  $\mathcal{C}$ with respect to $\omega$.
\end{definition}

This definition is motivated by the following fact, due to Serre (see \cite[Prop.3]{Se}).
For any finite $L$-comodule $E$ there exists a short exact sequence of  $L$-comodules 
$$ 0 \longrightarrow F' \longrightarrow  F\longrightarrow E \longrightarrow 0,$$
in which $F'$ and $F$ are $R$-finite projective. Thus, the subcategory $\comodo(L)$ of $R$-finite projective $L$-comodules is a subcategory of definition in $\comodf(L)$.  
 
\begin{theorem}[cf. {\cite[Thm. II.2.3.5, II.2.6.1]{SR72}}] \label{thSa} 
Let $R$ be a Noetherian ring and let $\mathcal C$ be an $R$-linear abelian category. Assume that there exist an $R$-
linear exact faithful functor $\omega:\mathcal C\to \modf(R)$ and a subcategory of definition $\mathcal C^{\rm o}$ with respect to $\omega$. Then $\omega$ 
factors through an equivalence $\mathcal C\simeq \comodf(L)$ and the forgetful functor, for some flat 
$R$-coalgebra $L$.\end{theorem}
 Although this theorem is formulated for any Noetherian ring, We don't know any examples of comodule category satisfying the conditions of Definition \ref{dominating} when $R$ is a ring of dimension larger than 1. 
 A self-contained proof of this theorem will be given in the Appendix.
\subsubsection{}\label{sect.123}
The coalgebra $L$ in the theorem above can be determined from the fiber functor $\omega$ as follows.  We claim that there is a natural isomorphism
\begin{equation}\label{eq_nat}
{\sf Nat}(\omega,\omega\otimes M)\simeq {\sf Hom}_R(L,M),\end{equation}
for any $R$-module $M$, see Appendix \ref{coend}.
 If $\mathcal C=\comodf(L)$ and $\omega$ is the forgetful functor from $\mathcal C$ to ${\sf Mod}(R)$, then the above isomorphism implies that $\coend (\omega)\simeq L$. In particular, a flat coalgebra over $R$ can be {\em reconstructed} from the category of its comodules.
 The isomorphism in \eqref{eq_nat} implies that, for any $R$-algebra $A$,
$${\sf Nat}_A(\omega\otimes A,\omega\otimes A)\simeq {\sf Hom}_R(L,A).$$
If $\mathcal C$ is a tensor category and $\omega$ is a (strict) tensor functor, then $L$ is a bialgebra and we have an isomorphism
$${\sf Nat}^\otimes_A(\omega\otimes A,\omega\otimes A)\simeq {\sf Hom}_{R-{\sf Alg}}(L,A),$$
 where ${\sf Nat}^\otimes$ denotes the set of natural transformations that are compatible with the tensor product. 
%Notice that, for the construction of $L$ from \eqref{eq_nat}, $\mathcal C$ can be any category.
 
\subsubsection{}Assume that $\mathcal C$ is an $R$-linear 
abelian tensor category. The reader is referred to 
\cite[Sect.~1]{DM} for the notion of dual objects. An 
object is called rigid if it possesses a dual. Notice that the image of a rigid object under a tensor functor to $\modf(R)$ 
is a finite projective module. Denote by  $\mathcal C^{\rm o}$ the full subcategory of  $\mathcal C$ consisting of rigid 
objects. We say that $\mathcal C$ is {\em dominated} by $\mathcal C^\text{o}$ if each object of $\mathcal C$ is a quotient of a rigid object.
 
 \begin{definition}   A (neutral) Tannakian category  over a Dedekind ring $R$ is an $R$-linear abelian tensor category $\mathcal C$, dominated by $\mathcal C^\text{o}$, together with an exact faithful tensor functor $\omega:\mathcal C\to {\sf Mod}(R)$. In this case, $\mathcal C^\text{o}$ is a subcategory of definition in $\mathcal C$.
\end{definition}

Let $\omega^\text{o}$ denote the restriction of $\omega$ to 
$\mathcal C^\text{o}$. Then we have, for any $R$-algebra $A$,
$${\sf Nat}^\otimes_A(\omega\otimes A,\omega\otimes A)\simeq 
{\sf Nat}^\otimes_A(\omega^\text{o}\otimes A,\omega^\text{o}\otimes A)\simeq 
{\sf Aut}^\otimes_A(\omega^\text{o}\otimes A,\omega^\text{o}\otimes A).$$
For the first isomorphism see the proof of Lemma \ref{lem_restriction}, for the second isomorphism see \cite[Prop.~1.13]{DM}.

\begin{theorem}[{\cite[Thm.~II.4.1.1]{SR72}}] \label{th_duality}
Let $(\mathcal C,\omega)$ be a neutral Tannakian category over a Dedekind ring $R$.
Then the group functor $A\mapsto {\sf Aut}_A^\otimes(\omega\otimes A)$ is representable by a flat group scheme $G$ and $\omega$ factors through an equivalence between $\mathcal C$ and $\repf(G)$.
\end{theorem}

\subsection{Scalar extension}
In \cite{We}, Wedhorn proposes to establish a duality for rigid tensor 
category over a Dedekind ring. In fact, in many examples, it is easy to specify a rigid 
tensor category, but it is very difficult to determine a Tannakian category
containing the given monoidal category as a subcategory of definition. The natural
problem is to characterize intrinsically the subcategory of rigid objects in a Tannakian 
category (over a Dedekind ring). For this, Wedhorn introduces the notion of scalar extension of category to define his Tannakian lattice. Wedhorn's Tannakian lattice is not necessarily 
equivalent to the full subcategory of rigid objects in a Tannakian category. In the next section we shall provide a characterization
of such categories. In this subsection we will recall the notion of scalar extension of categories and some basis properties.

\subsubsection{} \label{s.111} 
Let $\varphi: R\to S$ be a ring homomorphism. Let $\mathcal C$ be an $R$-linear category. 
The category $\mathcal{C}_{S}$ obtained from $\mathcal{C}$ by scalar extension $\varphi$ is defined as follows.
The objects of $\mathcal{C}_{S}$ are the same as those of $\mathcal{C}$ and for two objects $X$ and $Y$ in $\mathcal{C}_{S}$ their hom-set is 
$${\rm Hom}_{\mathcal{C}_S}(X, Y) := {\rm Hom}_{\mathcal{C}}(X, Y ) \otimes_{R} S.$$
We have an $S$-linear category together with an $R$-linear functor $\varphi_*:\mathcal C\to \mathcal C_S$.

If the map $\varphi:R\to S$ is flat, the functor $\varphi_*$
preserves  monomorphisms and epimorphisms \cite[3.6]{We}.
Let $\mathcal{D}$ be another $R$-linear  category and   
$\omega: \mathcal{C} \longrightarrow \mathcal{D}$ be an $R$-linear functor. Then we have a functor 
$$\omega_{S} : \mathcal{C}_{S} \longrightarrow \mathcal{D}_{S}.$$
If $\varphi$ is flat and $\omega$ is faithful then so is $\omega_{S}$ \cite[3.7]{We}.

 %\subsubsection{}
Assume  that $\mathcal{M}$ is an $R$-linear tensor category. 
The $R$-bilinear functor $\otimes: \mathcal{M} \times \mathcal{M} \longrightarrow \mathcal{M}$ extends to an 
$S$-bilinear functor $\otimes: \mathcal{M}_{S}  \times \mathcal{M}_{S} \longrightarrow \mathcal{M}_{S}.$
In this way $\mathcal{M}_{S}$ is an  $S$-linear tensor category. It is rigid if $\mathcal{M}$ is rigid.

\subsubsection{}\label{s.216}
Let  $L$ be a flat $R$-coalgebra. 
 Then $L_K:= L{\otimes}_R K$ is a coalgebra over $K.$ There is a natural 
functor  from   ${\rm Comod}^o(L)$ to ${\rm Comod}_f(L_K)$,  
$M\mapsto M\otimes_RK$. This induces a  functor
$$ \phi: {\rm Comod}^o(L)_K  \longrightarrow  {\rm Comod}^o(L_K).$$
This functor $\phi$ is an equivalence of abelian categories \cite[Subsection 6.4]{We}.
 Consequently, if $G$ is a flat affine group scheme over $R,$ we have an equivalence of abelian tensor categories
$${\rm Rep}^o(G)_K \equiv{\rm Rep}_f(G_K).$$

\subsubsection{Warning} Monomorphisms in 
$\Comod^o(L)$ are injective comodules maps, as the kernel 
of a map in $\Comod^o(L)$ is again in $\Comod^o(L)$ 
($L$ being flat). However an epimorphism in this category is 
not necessarily a surjective map, for instance a map 
$[a]:V\to V$ given by multiplying with a non-unit $a$ in $R$ ($a\neq 0$) is 
an epimorphism in $\Comod^o(L)$ but is not surjective. Put in another way, the 
forgetful functor from $\Comod^o(L)$ to $\Mod(R)$ does 
not preserve epimorphisms. Later we shall impose the 
condition that our fiber functor preserves images (of 
morphisms) as a replacement for the exactness.

\subsubsection{}\label{rmk.fiber}
For an $R$-linear abelian tensor category $\mathcal C$, we define the special fiber $\mathcal C_s$ at a closed point $s$ of $S:={\sf Spec}(R)$ to be the full subcategory of objects which satisfy $m_sX=0$, where $m_s$ is the maximum ideal of $R$, which determines $s$. 

For a flat affine group scheme $G$, we can identify $\repf(G_s)$ with $\repf(G)_s$,
cf. \cite[Chapt.~10]{JC}.
Indeed, let $L=R[G]$ and $L_s:=L\otimes_RR/m_s$. Then any $L_s$ comodule is an $L$-comodule in a natural ways, as we have $V\otimes_RL\cong V\otimes_{k_s}L_s$ for any $k_s$-vector space $V$ (on which $R$ acts through the map $R\to k_s$. 

If $\mathcal C$ is Tannakian then the fiber functor $\omega$ yields an equivalence between $\mathcal C_s$ and $\repf(G_s)$ where $G$ is the Tannakian group of $\mathcal C$.

One can show that $\mathcal C_s$ is equivalent to the scalar extension $\mathcal C_{k_s}$ where $k_s=R/m_s$ is the residue field of $R$ at $s$. In fact, the $R$-linearity yields the functor $X\mapsto X\otimes_Rk_s$ from $\mathcal C$ to $\mathcal C_s$ and hence a functor from $\mathcal C_{k_s}\to \mathcal C_s$ which is  an equivalence, as the base change $R\to k_s$ does not affect $\mathcal C_s$.

\section{Duality for Tannakian lattices}\label{Sect2} 
\subsection{The kernel and image of a morphism}
For the definition of a neutral Tannakian lattice, we shall need the notion of the kernel and the image of a morphism. The notion of kernels is standard in the category theory, we recall it here for the sake of the reader.
\subsubsection{Kernel} Let $\mathcal C$ be an additive category, i.e. the hom-sets are equipped with abelian group structures and the composition of morphisms is bi-additive. For a morphism $f:X\to Y$, the kernel of $f$ is the equalizer of $f$ and the zero map, i.e. the final object in the category of morphisms $h:Z\to X$ satisfying $f\circ h=0$:
$$\xymatrix{Z\ar[rd]^{h}\ar[d]_{\exists!\varphi}\\
\ker f\ar[r]_i&X\ar[r]_f&Y.}$$
The morphism $i:\ker f\to X$ is then a monomorphism.
\subsubsection{Image-factorization}\label{dn.121} The image-factorization of a morphism $f:X\to Y$ is the initial object in the category of factorizations of the form $f=g\circ h$ with $g$ being a monomorphism. 
That is,  if $f= g'h'$ is another factorization with $g'$ being a monomorphis
\begin{displaymath}
\xymatrix{ X\ar[dr]_{h'}\ar[r]^{h}\ar@/^20pt/[rr]^f &\im(f)\ar[r]^{g} \ar@{-->}[d]_{m}&Y \\
 &Z\ar[ru]_{g'}}\end{displaymath}
 then  there exists a unique morphism $m$ such that $g = g'm.$ Since $g'$ is a monomorphism, we will also have $m\circ h=h'$. If the image-factorization exists for any morphism we say that  $\mathcal C$ is a {\em category with images}.
%\end{definition}

%\begin{definition}
Let $\mathcal{C,D}$ be categories with images.
  A functor $\omega: \mathcal{C} \longrightarrow \mathcal{D}$ is said to  preserve images if $\omega$ preserves the image-factorization for any morphisms in $\mathcal C$.
%\end{definition}

\begin{example} (i) Any abelian category obviously has kernels and images. An exact functor between abelian categories preserves kernels and images.

(ii) The category ${\rm Mod}^o(R)$ ($R$ being a Dedekind ring) has kernels and images, which are determined set theoretically; the forgetful functor to ${\rm Mod}_f(R)$ preserves kernels and images.

(iii) For a flat affine group scheme $G$ over a Dedekind ring $R$, the category ${\rm Rep}^o(G)$ has kernels and images, and the forgetful functor from ${\rm Rep}^o(G)$ to ${\rm Mod}_f(R)$  preserves kernels and images. 
\end{example}

\subsection{Tannakian  lattice} 
\subsubsection{}
In an additive tensor category the endomorphism ring of the unit object $I$ is a 
commutative ring. Given a commutative ring $R$, an {\em 
additive rigid tensor  category $\mathcal T$ over $R,$} is an $R$-linear
additive rigid tensor category in which the natural map  $R\to {\rm End}
(I)$ in an isomorphism, where  $I$ denotes the unit object. 

 An object $J$ in $\mathcal T$ is called trivial if there is a monomorphism $J\to I^n$. The full subcategory of trivial objects of $\mathcal T$ is denoted by $\mathcal T^\text{triv}$.

\begin{definition}\label{dn.232}
Let $R$ be a Dedekind ring. {\em A neutral 
Tannakian lattice over $R$} is an additive rigid  tensor 
category $\mathcal T$ over $R$, in which kernels and images exist, the category $\mathcal T_K$ given by scalar extension is abelian, and is equipped with an $R$-linear additive tensor functor $\omega:\mathcal T\to \Mod(R)$, satisfying the following conditions:
\begin{itemize}\item[T1)]  $\omega$ is faithful and preserves kernels and images.
\item[T2)]  $\omega$ restricted to $\mathcal T^\text{triv}$ is fully faithful. \end{itemize}
\end{definition}

\subsubsection{}\label{s.213}Since $\mathcal T$ is rigid, the image of $\omega$ is in $\Mod^o(R)$.
Since $\omega$ is faithful, the hom-sets in $\mathcal T$ are  finite flat modules over $R$, consequenlty the functor $\iota_*:\mathcal T\to\mathcal T_K$ is faithful. Therefore, we shall identify $\mathcal T$ with its 
image in $\mathcal T_K$.  In particular, morphisms in 
$\mathcal T$ are considered as morphisms in $\mathcal 
T_K$. An object $X$ of $\mathcal T$, when considered as 
object of $\mathcal T_K$, will be denoted by $X_K$, and the 
image of a morphism $f$ under $\iota_*$ will be denoted 
by the same symbol $f$.

For an element $a\in R$ and objects $X,Y$ in $\mathcal T$, 
we shall use the  symbol $[a]$ to denote the morphism 
$X\to Y$ induced by $a$. Further we shall write $[a]\circ 
f=f\circ [a]$ simply as $af$. If $f:X_K\to Y_K$ is a 
morphism in $\mathcal T_K$, then there exists $a\in R$ 
(not uniquely determined) such that $af$ is a morphism in 
$\mathcal T$. The proof of the following lemma is obvious.
\begin{lemma}\label{lem.113}
Let $g:X\to Y$ be such that $g:X_K\to Y_K$ is an isomorphism. Then there exist $a\in R$ and $h:Y\to X$, such that $h\circ g=g\circ h=[a]$. More general, given objects $X,Y,Z$ in $\mathcal T$ and morphisms $f:X\to Z$, $g:Y\to Z$. Assume that there exists $h:X_K\to Y_K$ such that $g\circ h=f$. Then there exists $a\in R$ and $h':X\to Y$, such that  $ah=h'$ and $g\circ h'= af$.
$$\xymatrix{X\ar[r]^a\ar[d]_{h'}& X\ar[d]^f\ar[dl]_h\\ 
				Y\ar[r]_g& Z}$$
\end{lemma}

\begin{lemma}\label{lem.136} Let $f:X\to Y$ be a 
morphism in $\mathcal T$. If there exists a map $\varphi:
\omega(X)\to\omega(Y)$ such that $a\varphi=\omega(f)$ 
for some $a\in R$, then there exists $g:X\to Y$ with $ag=f$ 
and $\omega(g)=\varphi$. In other words, the submodule 
$\omega(\Hom_{\mathcal T}(X,Y))$ is saturated in 
$\Hom_R(\omega(X),\omega(Y))$.\end{lemma}
\begin{proof}
First, assume that $Y=I$ -- the unit object. Consider the 
image-factorization of $f=hg:X\to J\to I$. Applying 
$\omega$ we have  the following commutative diagram
$$\xymatrix{ \omega(X)\ar[d]_\varphi\ar@{->>}[r]^{\omega(g)}\ar[rd]|{\omega(f)}& \omega(J)\ar[d]^{\omega(h)}\ar@{.>}[dl]\\
R\ar[r]_{[a]}&R}$$
%It implies that ${\rm ord}(b)\geq{\rm ord}(a)$, that is $b/a\in R$ and 
%$$\varphi=\frac ba \omega(g)=\omega(\frac ba g).$$
Thus $\omega(h)$ is injective and its image lies in $aR$, hence it factors as $\omega(h):\omega(J)\xrightarrow{\ \psi\ } R\xrightarrow{\ [a]\ }R$. Since $\omega$ is fully faithful on trivial objects, we have $\psi=\omega(h')$, that is, $\varphi=\omega(h'\circ g)$. Hence $f=a(h'\circ g)$.

The general case follows from this case by means of the isomorphism
$${\Hom}_{\mathcal T}(X,Y)\cong {\Hom}_{\mathcal T}(X\otimes Y^\vee,I),$$
which is compatible with the fiber functor $\omega$.
\end{proof}

\begin{lemma}\label{T_K}
The functor $\omega_K:\mathcal T_K\to {\sf Vect}(K)$ is exact. Thus $(\mathcal T_K,\omega_K)$ is a neutral Tannakian category over $K$.
\end{lemma}
\begin{proof}Recall that the functor $i_*:\mathcal T\to\mathcal T_K$ preserves mono- and epimorphisms.

Let $Y_K\xrightarrow{\ f\ }Z_K$ be an epimorphism in 
$\mathcal T_K$. Multiplying $f$ with an element from $R$ if 
needed, we can assume that $f$ is in $\mathcal T$. 
Consider the image-factorization of $f$ in $\mathcal T$: 
$f=Y\xrightarrow{\ h\ }\im(f)\xrightarrow{\ g\ }Z$. Thus, in 
$\mathcal T_K$, $g$ is both epi- and monomorphism, 
hence an isomorphism. Now $\omega (h)$ is surjective 
hence so is $\omega(h)_K$, thus $\omega(f)_K$ is also 
surjective.

Let $g:X_K\to Y_K$ be the kernel of $f$ in $\mathcal T_K$. 
We can similarly assume that $g$ is in $\mathcal T$. 
Consider the image-factorization of the map $X\to \ker(f)$ induced by $g$ in $\mathcal T$:
$X\xrightarrow{\ p\ } \im(g)\xrightarrow{\ q\ } \ker(f).$ We 
have, as above, $q$ is epi- and monomorphism in $\mathcal T_K$.
Applying $\omega$ to the factorization, we obtain
$$\omega(X)\xrightarrow{\omega(p) } \omega(\im(g))\xrightarrow{\omega(q)} \omega(\ker(f)),$$
with $\omega(q)$ being invertible in ${\sf Vect}(K)$ and $\omega(p)$ being surjective. Consequently, $\omega(X)$ is the kernel of $\omega(f)$.
\end{proof}

\subsubsection{}
Let be $G$ be a flat affine group scheme over $R,$ then the category
 ${\rm Rep}^{o}(G)$ of representations of $G$ in finite projective  $R$-modules equipped with the forgetful functor   to $\Mod(R)$, is  a Tannakian lattice over $R.$

\subsection{Tannakian duality} \label{s.Tan-duality}
\subsubsection{}\label{s.311}
Let ($\mathcal{T},\omega$) be a Tannakian lattice over $R$. The discussion in \ref{sect.123} yields a coalgebra $L$ and a factorization  
 $$\xymatrix{\mathcal T\ar[rr]^\omega\ar[rd]_{\omega^L}&& \Mod(R)\\
& \comodo(L)\ar[ru]_\nu}$$
where $\nu$ is the forgetful functor. 
  Recall that $L$ is a bialgebra, $\omega^L$ is a tensor functor and we have an isomorphism
$${\sf End}^\otimes_S( \omega \otimes S)\simeq {\sf Hom}_{R-{\sf Alg}}(L,S),$$
 for any $R$-algebra $S$. 
According to \cite[Prop.~1.13]{DM}, we have
$${\rm Aut}^\otimes_S(\omega \otimes_R S)\cong{\rm End}^\otimes_S( \omega \otimes _RS)
%\cong {\rm Nat}^\otimes_R(\omega,\omega \otimes_RS)
.$$
Hence the functor ${\bf Aut}^\otimes_R(\omega):S\mapsto {\rm Aut}^\otimes_S(\omega\otimes_R S)$ is representable by $L$. That is, $L$ is a (commutative) Hopf algebra and 
$${\bf Aut}^\otimes_R(\omega)(S)\cong\Hom_\text{alg}(L,S).$$
 Thus ${\bf Aut}^\otimes_R(\omega)$ is an affine group scheme over $R$.
 
 \begin{theorem} \label{main} Let $(\mathcal T,\omega)$ be a Tannakian lattice over a Dedekind ring $R$. Then the group scheme $G={\bf Aut}^\otimes_R(\omega)$ is faithfully flat over $R$ and $\omega$  induces an equivalence 
between $\mathcal{T}$ and ${\rm Rep}^o(G)$.
\end{theorem}
 
\subsubsection{} The difficulty lies in showing that $L$ is flat. We shall use an indirect construction to prove that the Hopf algebra $L$ satisfies the claims of Theorem \ref{main}. 
Lemma \ref{T_K} shows that $\omega_K:\mathcal T_K\to {\rm Vect}(K)$ is an exact, faithful functor, i.e. a fiber functor for the abelian rigid tensor category $\mathcal T_K$. 
Thus $(\mathcal T_K,\omega_K)$ is a Tannakian category. 
The classical Tannakian duality yields a Hopf algebra $\mathcal L$ over $K$ and an equivalence $$\omega_K:\mathcal T_K\cong\Comod_f(\mathcal L).$$
For each $X\in\mathcal T$, we have
$$\omega(X)\otimes_RK=\omega_K(X_K).$$
Thus $$\omega(X)\otimes_R\mathcal L=\omega_K(X_K)\otimes_K\mathcal L.$$
Therefore we can consider $\omega(X)$ as a comodule over the $R$-coalgebra $\mathcal L$. Denote by $\mathcal L_R$ the union in $\mathcal L$ of all the coefficient spaces ${\sf Cf}(\omega(X))$ as $X$ runs in the objects of $\mathcal T$. Then $\mathcal L_R$ is a Hopf $R$-subalgebra of $\mathcal L$, which coacts on all $\omega(X)$, (moreover we have $\mathcal L_R\otimes K=\mathcal L$). Thus the functor $\omega$ factors through the functor (followed by the forgetful functor):
\begin{equation}\label{eq_LR}\omega:\mathcal T\to \comodo (\mathcal L_R).\end{equation}
 
\subsubsection{Proof of Theorem \ref{main}}
We proceed by showing that the functor $\omega$ in \eqref{eq_LR} is an equivalence of categories.

 {\em $\omega$ is fully faithful}:
%The functor $\omega$ factors through a functor to $\Comod(\mathcal L_R)$ and the forgetful functor.
% We show that $\omega$ yields an equivalence of categories between $\mathcal T$ and $\Comod^o(\mathcal L_R)$.
By assumption, $\omega$ is faithful. 
On the other hand,  the Tannakian duality for $\mathcal T_K$ says that
$$\omega(\Hom_{\mathcal T}(X,Y))\otimes_RK=\Hom^{\mathcal L}(\omega_K(X_K),\omega_K(Y_K)).$$
Let $\varphi\in \Hom^{\mathcal L_R}(\omega(X),\omega(Y)).$ Then the above equality ensures the existence of a morphism $f\in 
\Hom_{\mathcal T}(X,Y)$ such that $\omega(f)=a\varphi$ for some $a\in R$. Now
Lemma \ref{lem.136} implies that there exists $g\in \Hom_{\mathcal T}(X,Y))$ such that $\omega(g)=\varphi$. 
That is, $\omega:\mathcal T\to\Comod^o(\mathcal L_R)$ is full.

{\em $\omega$ is essentially surjective}: Each finite projective $\mathcal L_R$-comodule $M$ is a special subquotient of $\mathcal L_R{}^r$ (direct sum of $r$ copies of $\mathcal L_R$) by means of the coaction: 
$\delta:M\to M\otimes_R\mathcal L_R,$
see  \ref{s.104}. 
 Consequently for any subcomodule $N$ in $\mathcal L_R{}^r$  containing $M$, the quotient $N/M$ is also flat. In particular, we can choose $N$ to be the subcomodule ${\sf Cf}(\omega(X))^r$ for some $X$ in $\mathcal T$. Thus we conclude that $M$ is a special subquotient of an object in the image of $\omega$.

Next, we show that each $N$ in $\Comod^o(\mathcal L_R)$ is isomorphic in this category to some $\omega(X)$. First assume that $N$ is a quotient of some $\omega(Y)$:
$$0\to M\to \omega (Y)\to N\to 0.$$
According to \ref{s.216}, 
there exists some $Z$ in $\mathcal T$ such that $N\otimes 
K\cong \omega_K(Z_K)$ in $\Comod(\mathcal L_K)$. Such 
an isomorphism yields an injective map
$N\to \omega(Z)$
in $\Comod(L)$. Consider the composition $\omega(Y)\to 
N\to \omega(Z)$. Since $\omega$ is fully faithful, this map is the 
image of a morphism $f:Y\to Z$. By assumption, $f$ has 
image in $\mathcal T$ and $\omega$ preserves it, hence we 
claim
$\omega(\im (f))=N.$
Dualizing the above sequence we have an exact sequence
$$0\to N^\vee\to \omega(Y^\vee)\to M^\vee\to 0.$$
The same discussion shows that $M^\vee\cong\omega(\im g)$ for some morphism $g$ in $\mathcal T$, thus $M\cong\omega (T)$ with $T:=(\im f)^\vee$.
Thus, all special subobjects of $\omega(X)$ are also in $\im(\omega)$. Consequently all special subquotients of $\omega$ are in $\im(\omega)$. This completes the proof of the fact that $\omega$ is an equivalence of categories between $\mathcal T$ and $\Comod^o(\mathcal L_R)$.

 {\em $\mathcal L_R$  is the coend of $\omega$}:  this follows from the equivalence just established. In deed, by the equivalence above, $\omega:\mathcal T\to \Mod(R)$ can be identified with the forgetful functor $\Comod^o(\mathcal L_R)\to \Mod(R)$. But the coend of this last functor is just $\mathcal L_R$, cf. \ref{s.311}.
  This finishes the proof of Theorem \ref{main}.\hfill $\Box$

\subsection{Torsors}

Let $(\mathcal T,\omega)$ be a Tannakian lattice. Let $S$ be 
a faithfully flat $R$-algebra. A fiber functor $\eta:\mathcal T\to \Mod^o(S)$ is defined to be an $R$-linear tensor functor 
satisfying the following conditions:
\begin{itemize}
\item[(T1)]  $\eta$ is faithful and preserves kernels and images.
\end{itemize}
In this section we construct out of this data a torsor 
${\bf Iso}^\otimes_R(\omega,\eta)$ over ${\bf Aut}^\otimes_R(\omega)$. In particular we will show that for 
different neutral fiber functors $\omega$ and $\omega'$, the 
representation categories of ${\bf Aut}^\otimes_R(\omega)$ 
and ${\bf Aut}^\otimes_R(\omega')$ are equivalent, thus 
determining a unique {\em abelian envelope} of $\mathcal T$.

\subsubsection{The abelian envelope of $\mathcal T$.} \label{s.41}
Let $(\mathcal T,\omega)$ be a neutral Tannakian lattice and let $\eta:\mathcal T\to \Mod(R)$ be another fiber functor. Let $L(\omega)$ be the coend of $\omega$ and $L(\eta)$ be the coend of $\eta$. 
Then Tannakian duality for $(\mathcal T,\omega)$ and $(\mathcal T,\eta)$ yields a functor $\varphi:\Comod^o(L(\omega))\to\Comod^o(L(\eta))$:
$$\xymatrix{& \mathcal T\ar[ld]_\omega\ar[rd]^\eta\\
\Comod^o(L(\omega))\ar[rr]_\varphi&&
\Comod^o(L(\eta)).}$$
The subcategory of trivial objects in $\comodo(L(\omega))$ is exactly the category $\modo(R)$ of finite projective $R$-modules.
The restriction of $\varphi$ to $\modo(R)$ is thus a faithful functor, preserving kernels and images and sending $R$ to itself. It is therefore a 
fully faithful functor. Thus, applying Theorem \ref{main} to $\varphi$ we 
conclude that this restriction of $\varphi$ is an equivalence of categories. We will show that $\varphi$ 
extends to an equivalence between $\Comod(L(\omega))$ and 
$\Comod(L(\eta))$.

\subsubsection{} Let
$V$ be in $\Comod^o(L(\omega))$. Denote 
$C:={\sf Cf}(V)\subset L(\omega)$. Then $\Comod^o(C)$ is 
equivalent to $\langle V\rangle_s$, cf. \ref{lem.106}. Let 
$W:=\eta(V)\in\Comod^o(L(\eta))$ and $D:={\sf Cf}(W)\subset L(\eta)$. Then $\varphi$ induces an equivalence between $\Comod^o(C)$ 
and $\Comod^o(D)$.

Let $A:=\Hom(C,R)$, then $A$ is a (non-commutative) algebra, finite projective as an $R$-module and we have an equivalence
$\Comod(C)\cong \Mod(A)$
inducing and equivalence
$$\Comod^o(C)\cong\Mod^o(A).$$
Here,  we denote by $\Mod(A)$ the category of finite left $A$-modules and by  $\modo(A)$ full subcategory   modules   which are finite projective over $R$.
Thus, denoting $B:=\Hom(D,R)$, we have and equivalence, also denoted by $\varphi$:
$$\Mod^o(A)\xrightarrow{\varphi}\Mod^o(B).$$

\begin{lemma}\label{lem.243} The functor $\varphi$ extends to an equivalence between $\Mod_f(A)$ and $\Mod_f(B)$.\end{lemma}
\begin{proof}Since $\varphi$ preserves kernels and images, it preserves exact sequences in 
$\Mod^o(A)$ (i.e sequences in $\Mod^o(A)$, exact in $\Mod(A)$.

Denote $P:=\varphi(A)$. Then $P$ is a $B-A$-bimodule (the 
action of $A$ on $P$ is induced from the right action of  $A$ 
on itself, which commutes with morphisms in $\Mod(A)$). 
 For an object $M\in\Mod^o(A)$, consider a finite resolution 
 $A^m\to A^n\to M\to 0$.  
We have an exact sequence
 $$\varphi(A^m)\to\varphi(A^n)\to\varphi(M)\to 0$$
 in $\Mod(B)$. Since $\varphi$ is additive, we have $\varphi(A^n)=P^n$, hence we have canonical isomorphism 
 $$\varphi(M)\cong P\otimes_AM.$$
Thus $\varphi$ coincides with $P\otimes_A-$ on  $\modo(A)$. 

We show that $P$ is flat over $A$.  
For a finite $A$-module $M$, consider a resolution
$$0\to N\to A^m\to M\to 0.$$
Since $N$ is $R$-projective being a submodule of $A^m$, and since $\varphi(-)=P\otimes -:\Mod^o(A)\to\Mod(B)$ preserves monomorphisms, the long exact sequence 
$$0\to {\rm Tor}^A_1(P,M)\to P\otimes N\to P^m\to P\otimes_AM\to 0$$
shows that ${\rm Tor}^A_1(P,M)=0$. Thus we have an exact functor $P\otimes-:\mod_f(A)\to\mod_f(B)$, which reduces to an equivalence $\modo(A)\to\modo(B)$. Hence it is itself an equivalence from $\mod_f(A)$ to $\mod_f(B)$.
\end{proof}

\begin{proposition}\label{torsor} The functor $\varphi:\Comod^o(L(\omega))\to \Comod^o(L(\eta))$ extends to an equivalence between $\Comod_f(L(\omega))$ and $\Comod_f(L(\eta))$.\end{proposition}
\begin{proof}
The equivalence is obtained by extending $\langle V\rangle_s$ larger and larger. 
\end{proof}
 
 \subsubsection{The torsor  ${\bf Iso}^\otimes_R(\omega,\eta)$}
 Consider now the more general situation: $\eta$ is a fiber functor $\mathcal T\to \Mod(S)$, where $S$ is an $R$-algebra.
Recall that ${\bf Iso}^\otimes_R(\omega,\eta)$ is the functor which associates to each $S$-algebra $S'$ the set ${\rm Iso}^\otimes(\omega,\eta)(S')$ of natural isomorphisms compatible with the tensor structure between the two given functors. According to \cite[Prop.~1.13]{DM}, this set is equal to the set
$ {\rm Nat}^\otimes_{S'}(S'\otimes_R\omega, S'\otimes_S\eta).$
% \cong {\rm Nat}^\otimes_R(\omega, S'\otimes_S\eta).$$
The algebra $L(\omega,\eta)$ represents functor ${\bf Iso}^\otimes_R(\omega,\eta)$ if it satisfies
$${\rm Nat}^\otimes_{S'}(\omega \otimes_RS',\eta \otimes_SS')\cong \Hom_{S\text{-alg}}(L(\omega,\eta),S'),$$
for any $S$-algebra $S'$.
As in \ref{sect.123}, we notice that $L(\omega,\eta)$ can be defined as the $S$-module representing the functor
$$S'\mapsto {\rm Nat}_{S'}(\omega \otimes_RS', \eta \otimes_SS')\cong
{\rm Nat}_R(\omega,\eta \otimes_SS').$$
That is, we determine $L(\omega,\eta)$ by the functorial isomorphism
\begin{equation}\label{eq_Loe}
{\rm Nat}_R(\omega,\eta \otimes_SN)\cong
{\rm Hom}_S(L(\omega,\eta),N),\end{equation}
for any $S$-module $N$.
\subsubsection{} Identifying $\mathcal T$ with $\Comod^o(L)$ by means of $\omega$, we can consider $\omega$ as the identity functor and $\eta$ as a tensor functor $\Comod^o(L)\to \Mod(S)$, where  $L:=L(\omega)$. 

Let $C\subset L$ be an $R$-finite subcoalgebra, and set $A=\Hom_R(C,R)$. The discussion in the proof of Lemma \ref{lem.243} shows that the restriction of $\eta$ to 
$\Comod^o(C)=\Mod^o(A)$ extends to an exact functor $P\otimes_A-:\mod_f(A)\to\mod_f(S)$, where $P=\eta(A)$.

\begin{lemma} Consider $\omega$ and $\eta$ as functors on the category $\Mod^o(A)$, we have natural isomorphism
$${\rm Nat}_R(\omega,\eta \otimes_SN)\cong P\otimes _SN\cong \Hom_S(\eta(C),N),$$
for any $S$-module $N$.
%, where $P^\vee$ denote the dual of $P$ as an $S$-module.
\end{lemma}
\begin{proof} This is standard. By the functoriality, a natural transformation $\lambda:\omega\to \eta \otimes_SN$ is uniquely determined by its value at $A$, i.e. an $R$-linear map $\lambda_A:A\to  \eta(A)\otimes_SN=P\otimes_SN$, which is $A$-linear.  Indeed, this is a consequence of the following commutative diagram, for any $f$ in $\Mod^o(A)$:
$$\xymatrix{A\ar[rr]^{\lambda_A}\ar[d]_f&& P\otimes_SN
\ar[d]^{P\otimes f\otimes N}\\
M\ar[rr]_{ \lambda_M}&&(P\otimes_AM)
\otimes_SN,}$$
forcing $\lambda_M=\lambda_A\otimes_AM$.
(The $A$-linearity follows from the choice $M=A$).
Conversely, any such $A$-linear map determines a natural equivalence.

As to the last isomorphism, notice that $C$ is $R$-finite projective, hence $A$ is the dual of $C$ in $\comodo(L)$, therefore $P=\eta(A)$ is dual to $\eta(C)$ as $S$-modules. Thus we have
$$P\otimes_SN={\rm Hom}_S(\eta(C),S)\otimes_SN\cong
{\rm Hom}_S(\eta(C),N).$$
% follows from the definition of $\eta$ and implies the last isomorphism of the claim.
This finishes the proof.
\end{proof}

\begin{proposition} The scheme ${\bf Iso}^\otimes_R(\omega,\eta)$ is a torsor under the group scheme ${\bf Aut}^\otimes_R (\omega)$.
\end{proposition}
\begin{proof} This is a remedy of \cite[Prop.~3.2]{DM}. According to the isomorphism in \eqref{eq_Loe} and the lemma above, the $S$-module
$$L(\omega,\eta)=\varinjlim_{C\subset L(\omega)}\eta(C)$$
represents the functor ${\bf Iso}^\otimes_R(\omega,\eta)$. Note that the transition maps in the directed system are inclusions of subcoalgebras $C\inj C'$ of $L$, which give rise to injective maps $\eta(C)\to\eta(C')$.
The torsor action is obvious, loc.cit. It  remains to check that ${\bf Iso}^\otimes_R(\omega,\eta)$ is faithfully flat over $S$, i.e. to check that $L(\omega,\eta)$ is faithfully flat over $S$. As $L(\omega,\eta)$ is the direct limit of $S$-projective modules, it is flat over $S$. We show the faithfulness.

If a finite subcoalgebra $C$ contains the unit element of $L(\omega)$, then the inclusion $R\to C$ splits in $\mod(R)$ (by means of the counit). We obtain an exact sequence in $\comodo(C)$:
$$0\to R\to C\to C/R\to 0$$
giving rise to an exact sequence in $\comodo(S)$ 
(as $\omega$ preserves kernels and images)
$$0\to S\to \eta(C)\to \eta(C/R)\to 0.$$
 Passing to the limit we obtain an inclusion $S\to L(\omega,\eta)$, which is pure, as
$$L(\omega,\eta)/R\cong \varprojlim_{R\subset C\subset L}\eta(C/R)$$
is flat.
This shows that $L(\omega,\eta)$ is faithfully flat over $S$.
\end{proof}

\section{Coalgebra homomorphisms}\label{Sect3}
\subsection{Specially locally finite coalgebras}
An important property of coalgebras over a field  is the local 
finiteness: a coalgebra is the union of its finite dimensional 
subcoalgebras. This property formally generalizes to flat 
coalgebras over a Dedekind ring. Indeed, let $M\subset L$ be  a finite subcomodule, then $M$ is  
projective over $R$ and is contained in the subcoalgebra   ${\sf Cf}(M)$ of $L$ (see proof of Lemma \ref{lem_loc.fin}).

However this is not fully reflected in the Tannakian duality.  
For a subcoalgebra $C$ of coalgebra $L$ over a field $k$, 
the category $\comodf (C)$ can be identified with a full, 
exact subcategory of $\comodf(L)$, which is closed under 
taking subobjects. This is no more the case for flat 
coalgebras over a Dedekind ring as the example \ref{ex:non-loc.fin} below 
shows. The reason is that the quotient module $L/C$ may 
be non-flat over the base ring. If we try to enlarge $C$ so 
that the quotient becomes flat, we cannot guarantee that it 
remains finite over $R$. In view of Tannakian duality, this 
reflects the fact that the full abelian subcategory generated 
by a single object in a comodule category may become quite  
large.

This phenomenon is one of the main obstructions to the 
study of flat coalgebras and flat group schemes over a 
Dedekind ring. Below we will show that the coordinate ring of a reduced, connected group scheme is specially locally finite as a coalgebra (Proposition \ref{smooth_lf}).

Recall that a subcomodule $M$ of an $L$-comodule $N$ is said to be special if $N/M$ is flat over $R$; a special subquotient $M$ of an $L$-comodule $N$ is a special submodule of a quotient of $N$, or, equivalently, a quotient of a special submodule of $N$.

\begin{definition} \label{def_special} Let $L$ be an $R$-flat coalgebra.
\begin{enumerate} 
\item A comodule $N$ is said to be {\em specially locally finite} if any finite subcomodule of $N$ is contained in an $R$-finite special subcomodule;
\item A subcoagebra $C$ of $L$ is said to be {\em special} if $L/C$ is flat. A homomorphism of flat coalgebras $f:L'\to L$ is said to be special if $f(L')$ is a special subcoalgebra of $L$;
\item $L$ is said to be {\em specially locally finite}  if for any finite subcomodle $C$, there exists a finite special subcoalgebra  containing $C$\footnote{Specially locally finite coalgebras are called IFP coalgebras in \cite[I.3.11]{Ha}.}.\end{enumerate}
\end{definition}

Let $N$ be an $L$-comodule. Then $N_{\rm tor}$, the $R$-torsion submodule of $N$ is an $L$-subcomodule. Hence for any $L$-subcomodule $M$, the preimage of $(N/M)_{\rm tor}$ in $N$, denoted $M^{\rm sat}$, is an $L$-comodule. Since $R$ is a Dedekind ring, the quotient $N/M^{\rm sat}$ is flat, being torsion free. Thus $M^{\rm sat}$ is the smallest special subcomodule of $N$, containing $M$. It is called the saturation of $M$ in $N$.

\begin{lemma}\label{lem_loc.fin}
An $R$-flat coalgebra $L$ is specially locally finite if and only if it $L$ is locally finite as a comodule on itself.
\end{lemma}
\begin{proof}
Assume that $L$ is specially locally finite as a right comodule on itself. Let $C$ be a finite subcoalgebra of $L$ and $C^{\rm sat}$ the saturation. It is to show that $C^{\rm sat}$ is a subcoalgebra.

We have the filtration $C^{\rm sat}\otimes C^{\rm sat}\subset C^{\rm sat}\otimes L\subset L\otimes L$, the successive quotients of which are flat, hence  $L\otimes L/C^{\rm sat}\otimes C^{\rm sat}$ is also flat. Thus 
$$(C\otimes C)^{\rm sat}\subset C^{\rm sat}\otimes C^{\rm sat}.$$
Hence, by the definition of $C^{\rm sat}$, we have
$$\Delta(C^{\rm sat})\subset (C\otimes C)^{\rm sat}\subset C^{\rm sat}\otimes C^{\rm sat}.$$

For the converse statement we use the well-known fact that each finite subcomodule of $L$ is contained in a finite subcoalgebra of $L$. Indeed, given $M\subset L$ finite, then it is $R$-projective and the coaction $M\to M\otimes L$ 
 induces a coalgebra map (cf. \ref{sect_cf})
$$M^\vee\otimes M\to L,\quad \varphi\otimes m\mapsto \sum\varphi(m_i)m'_i, \quad \text{where }\Delta(m)=\sum_im_i\otimes m_i'.$$ Its image is   the coefficient space ${\sf Cf}(M)$ of $M$, which contains $M$. In fact, let $\varepsilon_M$ be the restriction of $\varepsilon$ to $M$, then
$\varepsilon_M\otimes m\mapsto \sum_i\varepsilon(m_i)m'_i=m$.
\end{proof}

The next example shows that there exist coalgebras which are not specially locally finite.

\begin{example}[\cite{SGA3}, Remarque 11.10.1]\label{ex:non-loc.fin} Let $0\neq x\in R$ be a non-unit element.
Let $G$ be the affine group scheme over $R$ determined by the Hopf subalgebra of $K[{\mathbf G}_a]=K[T]$:
$$R[G]:=\left\{ P\in K[T] | P(0)\in R\right\},$$
that is, $R[G]$ consists of polynomials in $K[T]$ with the constant coefficients belonging to $R$. Let $C_i$ be the subcoalgebra spanned (over $R$) by $1$ and $x^{-i}T$. Then we have
$$C_i\subsetneq C_{i+1},\quad xC_{i+1}\subsetneq C_i.$$
Thus the saturation of $C_0$ is not finite.
\end{example}

In what follows we will need some standard facts on tensor products and flat modules, cf. \cite{Bour}.
 \begin{lemma}\label{lem_intersect} Let $A\subset B$ be flat $R$-modules and $M,M_1,M_2\subset N$ be arbitrary $R$-modules. Then
 \begin{enumerate}
 \item   $M_1\otimes A\cap M_2\otimes A=(M_1\cap M_2)\otimes A$,  $M_1\otimes A+ M_2\otimes A=(M_1+ M_2)\otimes A$, 
   as subsets of $N\otimes A$;
 \item if $B/A$ is also flat, we have
$N\otimes A\cap M\otimes B=M\otimes A$
as submodules in $N\otimes B$.
\end{enumerate}
\end{lemma}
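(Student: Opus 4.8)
\textbf{Proof plan for Lemma \ref{lem_intersect}.}

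The plan is to reduce everything to the elementary fact that a short exact sequence stays exact after tensoring with a flat module, and that the functor $-\otimes A$ (for $A$ flat) commutes with finite intersections and sums precisely because it is exact and preserves injections. For part (1), the containments $(M_1\cap M_2)\otimes A\subseteq M_1\otimes A\cap M_2\otimes A$ and $(M_1+M_2)\otimes A\supseteq M_1\otimes A+M_2\otimes A$ are formal, so the work is the reverse inclusions. For the intersection: apply $-\otimes A$ to the exact sequence $0\to M_1\cap M_2\to M_1\oplus M_2\xrightarrow{(i_1,-i_2)} N$ (more precisely to $0\to M_1\cap M_2\to M_1\oplus M_2\to M_1+M_2\to 0$ together with the injection $M_1+M_2\hookrightarrow N$); flatness of $A$ keeps all maps injective and the sequence exact, and chasing the resulting diagram inside $N\otimes A$ identifies $(M_1\cap M_2)\otimes A$ with the kernel of $M_1\otimes A\oplus M_2\otimes A\to N\otimes A$, which is exactly $M_1\otimes A\cap M_2\otimes A$. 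For the sum, apply $-\otimes A$ to $0\to M_1+M_2\to N$ and to the surjection $M_1\oplus M_2\twoheadrightarrow M_1+M_2$: flatness preserves the injection into $N\otimes A$, so the image of $M_1\otimes A\oplus M_2\otimes A$ computed inside $N\otimes A$ is both $(M_1+M_2)\otimes A$ and $M_1\otimes A+M_2\otimes A$.

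For part (2), consider the inclusion $M\subseteq N$ and the exact sequence $0\to A\to B\to B/A\to 0$. Tensoring the latter with $N/M$ (which need not be flat, but that is harmless here) gives $0\to N/M\otimes A\to N/M\otimes B$? — no, this map need not be injective, so instead I would argue with the diagram
\[
\xymatrix{
0\ar[r]&M\otimes A\ar[r]\ar[d]&N\otimes A\ar[r]\ar[d]&(N/M)\otimes A\ar[r]\ar[d]&0\\
0\ar[r]&M\otimes B\ar[r]&N\otimes B\ar[r]&(N/M)\otimes B\ar[r]&0
}
\]
whose rows are exact because $A$ and $B$ are flat, and whose vertical maps are injective because $B/A$ is flat (tensoring $0\to A\to B$ with any module $P$ and using the flatness of $B/A$ shows $P\otimes A\to P\otimes B$ is injective; apply this to $P=M,N,N/M$). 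A diagram chase then shows $N\otimes A\cap M\otimes B=M\otimes A$ inside $N\otimes B$: an element of $N\otimes A$ mapping into $M\otimes B$ has zero image in $(N/M)\otimes B$, hence (by injectivity of $(N/M)\otimes A\to(N/M)\otimes B$) zero image in $(N/M)\otimes A$, hence lies in $M\otimes A$ by exactness of the top row.

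The main obstacle is bookkeeping rather than mathematics: one must be careful that all the identifications are taking place inside the ambient module ($N\otimes A$ in part (1), $N\otimes B$ in part (2)) rather than abstractly, since the statements are about literal subsets. The one genuine input beyond exactness is the observation that flatness of the \emph{quotient} $B/A$ is what makes $P\otimes A\to P\otimes B$ injective for arbitrary $P$; without it the intersection statement in part (2) fails. Everything else is the standard behaviour of the exact functor $-\otimes A$ on finite diagrams of submodules of a fixed module, so I would simply cite \cite{Bour} for these generalities and spell out only the two diagram chases above.
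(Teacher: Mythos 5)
Your proof is correct. Note that the paper itself offers no argument for this lemma at all: it is stated as a collection of "standard facts" with a citation to Bourbaki, so there is nothing to compare against beyond observing that your diagram-chase argument is the standard one and is complete -- part (1) via tensoring $0\to M_1\cap M_2\to M_1\oplus M_2\to M_1+M_2\to 0$ with the flat module $A$ and embedding everything in $N\otimes A$, part (2) via the two exact rows for $M\subset N$ tensored with $A$ and $B$ and the injectivity of $P\otimes A\to P\otimes B$ coming from $\mathrm{Tor}_1^R(P,B/A)=0$. One small remark: your parenthetical hesitation ("no, this map need not be injective") about $(N/M)\otimes A\to (N/M)\otimes B$ is unnecessary -- under the hypothesis that $B/A$ is flat this map \emph{is} injective, which is precisely the point you go on to establish and use in the chase; you could delete that aside and state the injectivity for $P=M,N,N/M$ directly. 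You are also right to insist that all identifications be made inside the ambient modules $N\otimes A$ and $N\otimes B$; that is the only place where care is needed, and your argument handles it.
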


The following result is proved in more generality in \cite[I.3.11]{Ha},  we recall it here for completeness.
The reader is referred to \cite[Tag~0599]{stacks-project} for the notion of Mittag-Leffler system. 
 \begin{proposition} Let $R$ be a Dedekind ring and $L$ be an $R$-flat coalgebra. 
\begin{enumerate}\item If $L$ is specially locally finite then $L$ is Mittag-Leffler as an $R$-module. Hence, if $L$ is moreover countably generated over $R$, it is $R$-projetive.
\item If $L$ is $R$-projective, then it is specially locally finite as an $R$-coalgebra. \end{enumerate}
\end{proposition}
\begin{proof}
 (i) Let $\{C_\alpha\}$ be the directed system of finite special subcoalgebras. Then for any finite $R$-module $N$, the system
 ${\sf Hom}_R(C_\alpha,N)$ is Mittag-Leffler. In fact, each inclusion $C_\alpha\to C_\beta$ splits, as $C_\beta/C_\alpha$ is $R$-torsion free and finite, hence projective over $R$. Consequently the map
 $${\sf Hom}_R(C_\beta,N)\to {\sf Hom}_R(C_\alpha,N)$$
 is surjective. Thus by definition, $L$ is Mittag-Leffler as an $R$-module. It is well-known that a flat, countably generated, Mittag-Leffler module is projective.

 (ii) We show that any projective comodule is specially locally finite. If $N\subset M$ is a subcomodule then $N^{\rm sat}$ is the preimage of $(M/N)_{\rm tor}$ under the quotient map $M\to M/N$. Thus we have to show that $N^{\rm sat}$ is finite provided that $M$ is projective and $N$ is finite as $R$-modules. This is a pure question of $R$-modules. Embed $M$ is a free $R$-module $F$ as a direct summand. Replacing $M$ by $F$ will only enlarge $N^{\rm sat}$, thus we can assume that $M$ is free over $R$. Then, as $N$ is finite, we can find a free direct summand of $F_0$ which contains $N$. Now $F_0=F_0^{\rm sat}$ implying $N^{\rm sat}\subset F_0$, hence it is finite.
 \end{proof}
\begin{questions}
It is not known if any specially locally finite $R$-flat coalgebra is $R$-projective. Another interesting question is: which affine flat $R$-group scheme of finite type is specially locally finite.
\end{questions}
\begin{proposition}\label{smooth_lf}
Let  $G$ be a flat group scheme of finite type over a Dedekind ring $R$. Assume that the generic fiber $G_K$ is reduced and connected. Then $R[G]$ is specially locally finite as an $R$-coalgebra.
\end{proposition}
 \begin{proof}  
Let $I$ be the augmented ideal of $R[G]$, that is $I={\sf ker}(\varepsilon)$.  Since the $R[G]$ is flat over $R$, the map $R[G]\to R[G]\otimes_RK=K[G_K]$ is injective and as $K$ is flat over $R$, the augmentation ideal of $K[G_K]$ is $I_K=I\otimes_RK$. With the assumption that $G_K$ is reduced and connected, $K[G_K]$ is an integral domain. By Krull's intersection theorem $\bigcap_m(I_K)^m=0$. 
%This implies that $\bigcap_mI^m=0$.

Let $M\subset R[G]$ be a finite submodule. Then there exists $m$ such that $M\otimes K\cap (I_K)^m=0$. We have $(I_K)^m=I^m\otimes K$.  Hence  
$(M\cap I^m)\otimes K=0$ implying $M\cap I^m=0$.
It follows that $M^{\rm sat}\cap I^m=0$. Indeed, if $0\neq a\in M^{\rm sat}\cap I^m$ then there exists $0\neq r\in R$ such that $ra\in M\cap I^m$, it forces $ra=0$, consequently $a=0$ as $R[G]$ is torsion free.   
Thus, the map $M^{\rm sat}\to R[G]/I^m$ is injective. But the module $R[G]/I^m$ is finite, hence so is $M^{\rm sat}$.
\end{proof}
 
On the other hand, the situation for group schemes with finite fibers turns out to be more complicated, as in the following example, communicated to us by dos Santos.
\begin{example} Let $R$ be a DVR of equal positive characteristic $p$, with uniformizer $\pi$. Let $G$ be the group scheme determined by the Hopf algebra
$$R[G]:= R[T]/(\pi T^p-T),\quad \Delta(T)=1\otimes T+T\otimes 1.$$
Then the fibers of $G$ are \'etale group scheme but $R[G]$ is not specially locally finite. Indeed, the saturation of the finite subcomodule spanned by $1$ and $T$ contains $T^{p^k}, k\geq 1,$ and is not finite.\end{example}

\subsection{Tannakian description of homomorphisms of coalgebras}
In the second part of this paragraph we will use the Tannakian duality to characterize (special) injective and surjective homomorphisms of flat coalgebras.

Let $f:L'\to L$ be homomorphism of  flat coalgebras over a Dedekind ring $R$. We denote by $\omega_f:\comodf(L')\to \comodf(L)$ the ``restriction'' functor (which considers each $L'$-comodule as an $L$-comodule by means of $f$, thus $\omega_f$ is the identity functor on the underlying $R$-modules). Then $\omega_f{}^{\rm o}:\comodo(L')\to\comodo(L)$ will denote the restriction of $\omega_f$ to the subcategory of finite projective comodules.
\begin{proposition} \label{hom_inj}
Let $f:L'\to L$ be homomorphism of  flat coalgebras over a Dedekind ring $R$. Then
\begin{enumerate}
\item $f$ is injective if and only if the functor $\omega_f{}^{\rm o}$ is fully faithful and its image in $\comodf(L)$ is closed under taking special subobjects.
\item $f$ is injective and special   if  and only if the natural functor $\omega_f{}^{\rm o}$   is fully faithful and its image in $\comodf(L)$ is closed under taking subobjects. In this case, the functor $\omega_f$ is also fully faithful and its image is closed under taking subobjects.
 \end{enumerate}
\end{proposition}
\begin{proof} We shall show the ``only if" implication for both claims at once. First notice that the full faithfulness claim in (i) is essentially proved in Lemma \ref{lem.106}. For claim (ii) the proof is almost the same: Assume that  $f$ is injective  and special.
Then the functor $\omega_f:\comodf(L')\to \comodf(L)$ can be considered as the identity functor on the underlying module category. Hence it is obviously faithful (and so is $\omega_f{}^{\rm o}$).

As mentioned in the proof of Lemma \ref{lem.106}, the condition for a map $\varphi:M\to N$ to be $L'$-comodules reads as follows:
$\rho'_N \varphi- (\varphi\otimes\id)\rho_M':M\to N\otimes L'$ is the zero map:
$$\xymatrix{
M\ar[r]^{\rho_M'}\ar[d]_\varphi	&M\otimes L'\ar[r]^{\id\otimes f}  \ar[d]_{\varphi\otimes\id}	& M\otimes L\ar[d]^{\varphi\otimes\id}\\
N\ar[r]_{\rho_N'}	&N\otimes L'\ar[r]_{\id\otimes f}		 & N\otimes L}$$
Now,  if   $L/L'$ is flat over $R$, the horizontal map $\id\otimes f:N\otimes L'\to N\otimes L$ in the above diagram is injective. Hence the map 
$\rho'_N \varphi- (\varphi\otimes\id)\rho'_M$ is zero if and only if its composition with $\id\otimes f$, which is $\rho_N \varphi- (\varphi\otimes\id)\rho_M:M\to N\otimes L$, is zero. Thus $\omega_f$ is full.

We show the closedness under taking (special) subquotients.
For $(M, \rho') \in $ $\comodf(L'),$ it image under $\omega_f$ is denoted by $(M,\rho)$. Let $(N,\rho_N)$ be a sub $L$-comodule of $(M,\rho)$. Thus we have commutative diagram
\begin{displaymath}
\xymatrix{ M \ar[r]^{\rho'}\ar[dr]_{\rho} & M \otimes L' \ar[d]^{\id\otimes f}\\
 N\ar@{^{(}->}[u] \ar[dr]^{\rho_{N}}& M \otimes L\\
                              & N \otimes L \ar@{^{(}->}[u].}
\end{displaymath}
To show that $\rho_N$ comes from a coaction of $L'$ on $Y$ amounts to showing that $\rho_N(N)\subset N\otimes f(L').$ The above diagram shows that $\rho_N(N)\subset N\otimes L\cap M\otimes f(L')$. 
If either $M,N, M/N$ or  $L/f(L')$ are flat over $R$, according to Lemma \ref{lem_intersect}, one has equality
\begin{eqnarray}\label{eq8}
  N{\otimes} L \cap M {\otimes} f(L') = N{\otimes} f(L').
\end{eqnarray}  
Thus $\rho_N(N)\subset N\otimes f(L')$, that is, $\rho'$ restricts to a coaction of $L'$ on $N$.

\bigskip

   Conversely, assume that the functor $\omega_f{}^{\rm o}:\comod(L')^{\rm o}\to\comod(L)^{\rm o}$ is fully faithful and its image is closed under taking special subobjects.  By the flatness of  $K$ over $R$, the functor  $\comod_{\rm f}(L')_K\to\comod_{\rm f}(L)_K{}$ is fully faithful and preserves mono- and epimorphisms. 
Therefore, by the equivalence in  \ref{s.216} the functor   $\comod(L'\otimes K)\to\comod(L\otimes K)$ satisfies conditions  of \cite[Thm~2.21]{DM}, hence $L'\otimes K\to L\otimes K$ is injective. Consequently the map $f:L'\to L$ is injective, (i) is proved. 

Now assume that the image of $\omega_f{}^{\rm o}$  is closed under taking subobjects. The discussion above shows that $f$ is injective. We will identify $L'$ with a subcoalgebra of $L$. For a non-invertible $0\neq\pi\in R$,
denote $C:=\pi L\cap L' \supset \pi  L'$. By means of Lemma \ref{lem_intersect} we see that $C$ is also an $L$-subcomodule of $L'$. Since an $L$-comodule is the union of its finite subcomodule, the assumption on $\omega_f{}^{\rm o}$ implies that $C$ is in fact an $L'$-subcomodule of $L'$: 
$$\Delta (C)\subset C\otimes L'\subset \pi L\otimes L'.$$
Thus for any $c\in C$ we have
$$\Delta (c)=\sum \pi a_i\otimes b_i,\quad a_i\in L,\ b_i\in L'.$$
Hence $c=\sum_i \pi\varepsilon(a_i)b_i\in \pi L'$. That is $C\subset \pi L'$, consequently $\pi L\cap L'=\pi L'$.
The last equation holds for any $\pi\in R$, it follows  that $L/ L' $ is torsion free over $R$, hence flat, as $R$ is a Dedekind ring.
\end{proof}

\begin{remarks}\label{rmk.3.7}
The proof of Proposition \ref{hom_inj}  
 is based on the fact that, when $R$ is a field, the map 
$\coend (\omega | \langle X \rangle)\to L$ is injective. The reader is referred to \cite[Thm~6.4.4]{Sza} for the detailed proof of this fact or to \cite[Lem~1.2]{phh_2016.jaa} for a short proof. In Saavedra's proof of this fact \cite[II.2.6.2.1]{SR72}, the proof of implication d) $\Rightarrow$ a) is incomplete.
\end{remarks}

We now express the local finiteness in terms of Tannakian duality. First we have the following characterization of finite coalgebras.  
\begin{proposition}\label{pro_finite}
Let $L$ be a flat coalgebra over a Dedekind ring $R$. Then $L$ is finite over $R$ if and only if $\comodf(L)$ has a projective generator which is $R$-projective.
\end{proposition}
\begin{proof} Assume that $L$ is finite. Then $L$ is projective over $R$ and there is an equivalence between 
$\comod(L)$ and $\mod(L^\vee)$, where $L^\vee$ is the dual $R$-module to $L$.
%
%, and hence is an $R$-algebra: for any $L$-comodule $M$ with coaction $\delta$, $L^\vee$ acts on $M$ as follows:
%$$\varphi\cdot m:=\sum_i\varphi(l_i)m_i, \text{ where }
%\delta(m)=\sum_i m_i\otimes l_i.$$
%Conversely, if $M$ is an $L^\vee$-comodule, let $(l_i)$ and $(\varphi^i)$ be dual bases on $L$ and $L^\vee$. We define the coaction of $L$ on $M$ as follows:
%$$\delta(m)=\sum_i\varphi^i\cdot m\otimes l_i.$$
%To check that these constructions are inverse of each other, if suffices to note that, by definition, the dual bases $(l_i)$ and $(\varphi^i)$ satisfies:
%$$\sum_i\varphi^i(l)l_i=l \text{ and }
%\sum_i\varphi(l_i)\varphi^i=\varphi, \text{ for all } l\in L, \varphi\in L^\vee.$$
Now $L^\vee$ is a projective generator in $\comodf(L)$.

Conversely, assume that $\comodf(L)$ has a projective generator, say $P$, which is $R$-projective. Then this category is equivalent to $\modf(A)$, where $A={\sf End}^L(P)$. 
%We claim that $P$ is projective over $R$. The exact functor ${\sf Hom}^L(P,-)$ on $\comodf(L)$ extends to its ind-category $\comod(L)$. Since $L$ is flat, the functor ${\sf Hom}^L(P,-\otimes L)$ is also exact.
%But 
%$${\sf Hom}^L(P,-\otimes L)\cong {\sf Hom}_R(P,-).$$
%Thus $P$ is projective as an $R$-module. 
As ${\sf End}_R(P)$ is $R$-finite projective and
  $A$ is an $R$-submodule of ${\sf End}_R(P)$, it is also projective. Therefore $A^\vee$ is a finite flat $R$-coalgebra and   $\mod(A)$ is equivalent to  $\comod(A^\vee)$. By Tannakian duality we conclude that $A^\vee\cong L$, whence $L$ is finite.
\end{proof}

Recall that for a comodule $X$ of $L$, $\langle X\rangle$ denotes the full subcategory of $\comod_f(L)$ consisting of subquotients of finite direct sums of copies of $X$. If $X$ is $R$-finite projective then $\langle X\rangle$ satisfies the conditions of Definition \ref{dominating} (since a submodule of a finite projective $R$-module is again finite projective, we can take for $\langle X\rangle^o$ the full subcategory of subobjects of finite direct sums of copies of $X$). 
Hence, by means of Theorem \ref{thSa}, we obtain a coalgebra 
$\coend (\omega | \langle X \rangle)$, the comodule category of which is equivalent to $\langle X\rangle$.
\begin{proposition}\label{coend_X} Let $L$ be a flat coalgebra over a Dedekind ring $R$ and let $\omega$ denote the forgetful functor $\comodf(L)\to\mod(R)$.
\begin{enumerate}
 \item For each $R$-finite projective comodule $X$ of $L$ the
    natural map $\coend (\omega | \langle X \rangle)\to \coend (\omega)= L$ is injective and special. If $L$ is specially locally finite then  $\coend (\omega | \langle X \rangle)$ is finite over $R$.
\item If for any $R$-finite projective comodule $X$, $\coend (\omega | \langle X \rangle)$ is finite, then $L$ is specially locally finite.
 \end{enumerate}
\end{proposition}
\begin{proof}

(i) The first claim follows from Tannakian duality Theorem \ref{thSa} and Proposition \ref{hom_inj}.

 Assume now that $L$ is specially locally finite. Let $M$ be an $R$-finite projective comodule of $L$. Then by assumption on $L$, there exists a special subcoalgebra $C$ of $L$, which is finite over $R$ and such that the coaction of $L$ on $M$ factors though that of $C$. According Proposition \ref{hom_inj} (ii), $\comod_f(C)$ is a full subcategory of $\comod_f(L)$ closed under taking subobjects. Since 
$M\in \comodf (C)$, we have $\langle M\rangle\subset\comodf(C)$. Hence we have a factorization
$$\coend(\omega|\langle M\rangle)\to C\to L.$$ 
But $C$ is  $R$-finite, whence the claim.

(ii) According to (i), we can cover $L$ by its finite special subcoalgebras  $L = \displaystyle\bigcup_i  L_i,$ where $\{L_i\}$ is a cofinal directed system, i.e. any two coalgebras $L_i,L_j$ are contained in some $L_k$. 
Hence, if $C$ is a finite subcoalgebra of $L$, then $C$ is contained in some $L_i$. Since $L_i$ is saturated in $L$,  $C^{\rm sat}\subset L_i$, hence is finite.
  \end{proof}

Finally we provide a condition for the surjectivity of a colagebra homomorphism.
\begin{proposition}\label{hom_sur}
Let $f:L'\to L$ be a morphism of flat coalgebras over $R$. Then $f$ is surjective if and only if the induced functor $\omega_f{}^{\rm o}:\comodo(L')\to \comodo(L)$ satisfies the following condition: each $M\in\comodo(L)$ is a special subquotient of $\omega_f(N)$ for some $N\in\comodo(L')$.
\end{proposition}
\begin{proof}
Assume that $f$ is surjective. Let $M$ be an  $R$-finite projective $L$-comodule. Then $M$ is a special subcomodule of   $L^{\oplus  r}$ for some $r>0$ (cf. \ref{s.104}). We identify $M$ with a subcomodule of $L^{\oplus  r}$ and choose a generating set $\{m_i\}$ of $M$. Let $m_i'\in L^{'\oplus r}$ be such that $f(m_i')=m_i$. The set $\{m_i'\}$ is contained in some finite module $N$ of $L^{'\oplus r}$. The homomorphic image of $N$ in $L^{\oplus r}$ is denoted by $N_1$, this is an $L$-subcomodule of $L^{\oplus r}$ which contains $M$. Since $M$ is special in $L^{\oplus r}$, it is also special in $N_1$. Thus $M$ is a special subcomodule of the quotient $N_1$ of $\omega_f(N)$, where $N$ is an $L'$-comodule.

Conversely, assume that $\omega_f{}^{\rm o}$ has the stated property. It suffices to show that any $R$-finite subcoalgebra $C$ of $L$ is in the image of $f$. Consider such a $C$ as a (right) $L$-comodule. 
By assumption, there exists an $R$-finite projective $L'$-comodule $N$, such that $C$ is a special subquotient of  $\omega_f(N)$ in $\comodf(L)$. 
Since $C={\sf Cf}(C)$, and $C$ is a special subquotient of $N$, according to subsection \ref{s.104} we conclude that $C\subset {\sf Cf}(N)$. On the other hand, it follows from the construction that ${\sf Cf}(N)$ is the image of ${\sf Cf}_{L'}(N)$ - the coefficient space of $N$ considered as an $L'$-comodule. Thus the map $L'\to L$ is surjective.
\end{proof}
 
\section{Tannakian description of group scheme homomorphisms}\label{Sect4}
Let $R$ be a Dedekind ring with fraction field $K$.
Let $f:G\to G'$ be a homomorphism of flat affine group schemes over $R$. 
We say that $f$ is surjective or a quotient homomorphism if it is faithfully flat. In the first part of this chapter we give a necessary and sufficient condition for the faithful flatness of $f$, in terms of Tannakian duality. Then we will give a condition for $f$ to be a closed immersion. Finally we give a criterion for the exactness of a sequence of group homomorphisms.

\subsection{Faithfully flat homomorphisms and closed embedding}  
The following theorem is a generalization of the well-known faithful flatness theorem for Hopf algebras: {\em a (commutative) Hopf algebra is faithfully flat over any Hopf subalgebra}, see, e.g.,\cite[Thm~14.1]{Wat}. The proof is developed from an idea of J.C. Moore \cite{Moore}. The advantage of this proof is that we don't need to assume the Hopf algebras involved to be of finite type.
\begin{theorem}\label{flatness}
  Let $L$  be a flat commutative Hopf algebras over $R$ and $L'$ be a Hopf  subalgebras. Then  $L$ is faithfully flat over $L'$  if and only if  $L/L'$ is $R$-flat, i.e., $L'$ is a special subcoalgebra of $L$.
\end{theorem}
\begin{proof} 
Assume that  $L$ is faithfully flat over $L'$.  Consider the tensor product of the exact sequence
\begin{eqnarray}
\label{eq6}
\xymatrix{0\ar[r]&L' \ar[r] &L\ar[r] &\ar[r]Q&0 }
\end{eqnarray}
  with $L$ over $L'$ we get an exact sequence
\begin{eqnarray}\label{eq7}
\xymatrix{0\ar[r]&L\ar[r]& L {\otimes}_{L'} L\ar[r]&L{\otimes}_{L'}Q\ar[r]& 0}
\end{eqnarray}
The multiplication $L{\otimes}_{L'} L\longrightarrow L$ splits this sequence, hence
$L{\otimes}_{L'}Q$ is flat over $L.$ Since $L$ is faithfully flat over $L',$ $Q$ is flat over $L'$ and therefore it is flat over $R.$

Conversely, assume that $L/L'$ is $R$-flat.
We first show that $L$ is flat over $L'$, i.e., for any $L'$-module $M$, ${\rm Tor}^{L'}_1(M,L)=0$.
The claim holds if $R$ is a field.

First assume that $M$ is $R$-flat. Choose  $P_{*}$, a projective resolution of $L$ as an $L'$-module. Then $P_{*} \otimes k$ is a projective resolution of $L \otimes k$ over $L' \otimes k$, where $k$ is a residue field or the fraction field of $R$. We have
$$ (M \otimes k)  \otimes_{(L' \otimes k)}(P_{*} \otimes k) \cong (M \otimes_{L'} P_{*}) \otimes k, $$ implying
$$H_i ((M \otimes_{L'} P_{*}) \otimes k) \cong {\rm Tor}^{L' \otimes k}_i (M \otimes k, L \otimes k),\mbox{ for all } i \ge 0.$$
Since $M \otimes_{L'} P_{*}$ is flat over $R$, a Dedekind ring, the universal  coefficient theorem, (see, e.g., \cite[Thm~3.1.6]{Weibel}), applies. Thus, for each $i\geq 1$, we have an exact sequence
$$ 0\to H_i (M \otimes_{L'} P_{*}) \otimes k \to H_i ((M \otimes_{L'} P_{*}) \otimes k) \to {\rm Tor}^{R}_1({ H_{i-1}}(M \otimes_{L'} P_{*}), k)\to 0.$$
That is, for all $i\geq 1$,
$$0\to {\rm Tor}^{L'}_i (M, L) \otimes k \to {\rm Tor}^{L' \otimes k}_i (M \otimes k, L \otimes k) \to {\rm Tor}^{R}_1({\rm Tor}^{L'}_{i-1} (M, L) , k)\to 0.$$
As $L/L'$ is flat, the map
$L' \otimes k \longrightarrow L\otimes k$ is injective, hence  flat. 
 Therefore
${\rm Tor}^{L' \otimes k}_i (M \otimes k, L \otimes k) = 0$, for all $i\geq 1$. Consequently
$${\rm Tor}^{R}_1({\rm Tor}^{L'}_{i-1} (M, L) , k) = {\rm Tor}^{L'}_i (M, L) \otimes k  =0, \mbox{ for all } i\geq 1.$$
This holds for any residue field and the fraction field of $R$, hence ${\rm Tor}^{L'}_0 (M, L)$ is  flat over $R$  and 
${\rm Tor}^{L'}_i (M, L) =0$ for all $i\geq 1$.

Let now $M$ be an arbitrary  $L'$-module. Then the $R$-torsion submodule $M_\tau$ of $M$ is also an $L'$-submodule. The quotient module $M/M_\tau$ is then $R$-flat.  As we have the exact sequence
$${\rm Tor}^{L'}_1(M_\tau,L)\to {\rm Tor}^{L'}_1(M,L)\to {\rm Tor}^{L'}_1(M/M_\tau,L)\to\ldots$$
it suffices to show ${\rm Tor}^{L'}_1(M,L)=0$ for $M$ being $R$-torsion.

For each non-zero ideal $p\subset R$, the submodule $M_p$ of elements annihilated by $p$, is also an $L'$-submodule. As $M$ is torsion, it is the direct limit of $M_p$. Since the Tor-functor commutes with direct limits, one can replace $M$ by some $M_p$. Since $R$ is a Dedekind ring, each non-zero ideal $p$ is a product of finitely many prime ideals. Therefore each $M_p$ has a filtration, each grade module of which is annihilated by a certain non-zero prime ideal. Thus using induction we can reduce to the case $M$ is annihilated by a prime ideal $p$. In this case $M=M\otimes k_p$ is an $L'\otimes k_p$-module, where $k_p:=R/p$ and we have
$$M\otimes_{L'}P_*= M\otimes_{L'\otimes k_p}(P_*\otimes k_p).$$
 Since $P_*\otimes_Rk_p$ is an $L'\otimes_Rk_p$-projective resolution of $L\otimes_Rk_p$, we see that
 $${\rm Tor}^{L'}_i(M,L)={\rm Tor}^{L'\otimes k_p}_i(M,L\otimes k_p)=0,$$
 as $L\otimes k_p$ is flat over $L'\otimes k_p$.
 
 Finally, we show that $L$ is faithfully flat over $L'$.
 
 Let $M$ be an $L'$-module, such that $M\otimes _{L'}L=0$. Then we have
$$M_k\otimes _{L'_k}L_k\cong (M\otimes _{L'}L)\otimes _Rk=0$$
 for any residue field $k$ or the fraction field $K$ of $R$.
Since  $L'_k\to L_k$ is faithfully flat, we have $M_k=0$ and $M_K=0$. If $M$ is finite over $L'$, this implies that $M=0$, according to \cite[Thm~4.9]{Mats}. In the general case, $M$ always contains a non-zero finite submodule and since $L$ is flat over $L'$, we see that $M=0$ if $M\otimes_{L'}L=0$. Thus $L$ is faithfully flat over $L'$.  
\end{proof}
  
  As a corollary of Theorem \ref{flatness} and Propositions \ref{hom_inj}, \ref{hom_sur} we have the following theorem.
 \begin{theorem} \label{th1}
Let $f: G  \longrightarrow G'$ be a homomorphism of  affine flat groups over $R,$ and $\omega_f^{\rm o}$  be the corresponding functor
 $\repo(G') \longrightarrow \repo(G).$ 
 \begin{enumerate}
 \item $f$ is faithfully flat if and only if $\omega_f{}^{\rm o}: \repo(G') \longrightarrow \repo(G)$ is fully faithful and its image is closed under taking subobjects. 
% \item Assume that $G, G'$ are of finite type over $R$. Then $f$ is faithfully flat if $\omega_f{}^{\rm o}$ is fully faithful and its image is closed under taking subobjects. 
 \item  $f$ is a closed immersion if and only if every object of  $\repo(G)$ is isomorphic to a special subquotient of
 an object of the form  $\omega_f(X'),$ $X' \in \repo(G').$
 \end{enumerate}
\end{theorem}

\begin{remarks}\label{rmk_co_ex}
 (i) For (commutative) Hopf algebras over a field, any injective homomorphism $L'\to L$ is automatically faithfully flat. This is not the case for Hopf algebras over a Dedekind ring. Take for example $R=k[x]$ and $L=R[T]=k[x,T]$ with the $R$-coalgebra structure given by $\Delta(T)=T\otimes 1+1\otimes T$, $\varepsilon(T)=0$  $S(T)=-T$ (i.e. $L=R[\mathbf G_a]$), and consider the map of Hopf $R$-algebras
$$f:L\to L, \quad T\mapsto xT.$$
The quotient $L/f(L)$ is not flat over $R$, hence Theorem \ref{th1} implies that $L$ is not faithfully flat over $f(L)$.

(ii) Theorem \ref{th1} implies that, if $f:L'\to L$ is a homomorphism of flat Hopf $R$-algebras, such that $L/f(L')$ is flat over $R$, then $L$ is faithfully flat over $f(L')$.

(iii) The claim (ii) of Theorem \ref{th1} generalizes a result of dos Santos, \cite[Prop.~12]{JSa}. 
\end{remarks}

\begin{questions}\label{Q2} Theorem \ref{th1} suggests the following question: find a criterion of a Tannakian category $\mathcal C$ such that its Tannakian group $G$ is pro-algebraic in the sense that
$$G=\varprojlim_\alpha G_\alpha$$
where each group scheme $G_\alpha$ is of finite type and each structure map $G_\beta\to G_\alpha$ is faithfully flat. Similarly, find a criterion such that $G$ is smooth in the sense that the $G_\alpha$ above are smooth. It seems that the local finiteness mentioned in the previous section closely relates to these problems.\end{questions}

\subsection{Tannakian description of exact sequences}
Recall that the right regular representation of $G$ on its coordinate ring $L$, $(g,h)\mapsto gh: (gh)(x)=h(xg)$, $g\in G$, $h\in L$ corresponds to the right coaction of $L$ on itself by the coproduct $\Delta$. The left regular action of $G$ on $L$, $(g,h)\mapsto gh: (gh)(x)=h(g^{-1}x)$ corresponds to the following  (right) coation of $L$ on itself:
$$a\mapsto \sum_ia'_i\otimes S(a_i),\quad\text{  where  }\Delta(a)=\sum_ia_i\otimes a_i', \text{ and $S$ denotes the antipode}.$$
%The combination of these (commuting) actions is the {\em conjugation action}.

Let $ G \longrightarrow A$ be a homomorphism of affine groups schemes over $R$. Let
$I_A$ be the kernel of counit $\epsilon: R[A] \longrightarrow R$, i.e. the augmentation ideal of $R[A]$, and let  $I_AR[G]$ be the ideal generated by the image of $I_A$ in $R[G].$ Then the kernel of $ G \longrightarrow A$ is the closed subscheme of $G$ with coordinate ring $R[G]/I_AR[G].$ 
A sequence 
\begin{displaymath}
\xymatrix{1 \ar[r] &  H \ar[r]^{q}&G\ar[r]^{p}& A \ar[r] &1}    
 \end{displaymath}                  
is said to be exact if  $p$  is a quotient map with kernel $H.$ We will provide a criterion for the exactness in terms of the functors 
\begin{eqnarray}\label{eq}
\xymatrix{\repo(A)\ar[r]^{p^*}&\repo(G) \ar[r]^{q^*}& \repo(H).}
\end{eqnarray}

We first need a lemma relating the coordinate rings $R[A]$, $R[G]$ and $R[H]$. 
%By assumption, $R[A]$ is a (special) Hopf subalgebra of $R[G]$ an $R[H]$ is a quotient of $R[G]$ determined by
%$$R[H]=R[G]\otimes_{R[A]}R=R[G]/R[A]_+R[G].$$
\begin{lemma}\label{l2}
Let $ G \longrightarrow A$ be a quotient map with kernel $H.$  \begin{itemize}
 \item[(i)] If $M$ is a $G$-module, then $M^H$, the $H$-trivial submodule of $M$, is stable under the action of $G$, i.e. it is a $G$-submodule of $M.$
 \item [(ii)]   $R[A] $ is  equal to $ R[G]^H$ as $G$-modules.
\end{itemize}
\end{lemma}

\begin{proof} (i) This follows immediately from the normality of $A$ in $G$, see \cite[I.3.2]{JC} for a proof.

(ii) The proof  is  based on the fact that $R[G]$ is faithfully flat over $R[A]$ and follows closely the proof for group schemes over fields, cf. \cite[Sect.~15.4]{Wat}. 

There is an isomorphism 
$H\times G\xrightarrow{\simeq} G\times_AG ;\quad (h,g)\to (hg,g),$
which precisely means that $G \longrightarrow A$ is a principal bundle under $H.$ In terms of the coordinate rings this isomorphism has the form
\begin{equation}\label{eq_PB}
\varphi:  R[G] \otimes_{R[A]} R[G] \cong  R[H] \otimes_R R[G],\quad a\otimes b\mapsto \sum_iq(a_i)\otimes a'_ib,
\end{equation}  
where $q:R[G]\to R[H]$ denotes the quotient map and $\Delta(a)=\sum_ia_i\otimes a'_i$. The inverse map is given by
$$q(a)\otimes b\mapsto \sum_ia_i\otimes S(a_i')b,$$
where $S$ denotes the antipode of $R[G]$. One checks that this assignment depends on $q(a)\in R[H]$ but does not depend on the choice of $a\in R[G]$. Now consider the following diagram:
$$\xymatrix{R[G]^H\ar[r]&R[G]\ar[r]^{d\qquad}\ar[rd]_{d'}& R[G]\otimes_{R[A]}R[G]\ar[d]^\varphi\\
&&R[H]\otimes_RR[G]}$$
where $d(a):=a\otimes 1-1\otimes a$. Then $d'$ is computed as follows:
$$d'(a)=\sum_iq(a_i)\otimes a'_i-1\otimes a,\quad{\rm where }\quad \Delta(a)=\sum_ia_i\otimes a'_i.$$
Thus $R[G]^H$ is precisely the kernel of $d'$, hence is also the kernel of $d$ as $\varphi$ is an isomorphism. On the other hand, as $R[G]$ is faithfully flat over $R[A]$, $R[A]$ is the kernel of $d$. We conclude that $R[A]=R[G]^H$.
\end{proof}

\begin{theorem}\label{th3}
  Let us be given a  sequence
 \begin{displaymath}
 \xymatrix{H \ar[r]^{q}&G\ar[r]^{p}& A}
 \end{displaymath}
 with $q$ a  closed  immersion  and $p$ faithfully  flat. 
 Then this sequence is  exact  if  and  only  if  the  following  conditions  are  fulfilled: \begin{itemize}
  \item[(a)]  For  an  object  $V \in \repo(G),$  $q^*(V )$  in  $\repo(H)$ is  trivial  if  and  only  if 
            $V \cong p^*U$  for some  $U \in \repo(A).$ 
 \item[(b)]  Let $W_0$   be the maximal trivial subobject of $q^*(V )$ in $\repo(H).$ Then there 
            exists $V_0 \subset V \in \repo(G),$ such that $q^*(V_0) \cong W_0.$
 \item[(c)]  Any $W \in \repo(H)$  is a quotient in (hence,  by taking duals,  a subobject 
            of) $q^*(V )$ for some  $V \in \repo(G).$                             
\end{itemize}
\end{theorem}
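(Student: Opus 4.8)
The plan is to translate the exactness of the sequence $1\to H\xrightarrow{q}G\xrightarrow{p}A\to 1$ into statements about representation categories via the Tannakian dictionary of Theorem \ref{th_duality}, and to use Theorem \ref{flatness} and Lemma \ref{l2} as the geometric input. For the ``only if'' direction, assume the sequence is exact. Condition (c) is then immediate: $\omega_q$ (applied to $\repo(G)$) is faithfully flat onto $\repf(H)$ by Theorem \ref{th1}(i) combined with the fact that $q$ is a closed immersion ... wait, here $q$ is the closed immersion and $p$ is the quotient, so for (c) the relevant map is that $H\hookrightarrow G$ and every $H$-representation, being a finite comodule over $R[H]=R[G]/I_AR[G]$, is a subquotient of one coming from $G$; this follows from Proposition \ref{hom_sur} applied to the surjection $R[G]\surj R[H]$. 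For (a), given $V\in\repo(G)$ with $q^*(V)$ trivial, the coaction of $R[G]$ on $V$ factors through $R[G]^H$, which by Lemma \ref{l2}(ii) equals $R[A]$; hence $V\cong p^*U$ for some $R[A]$-comodule $U$, and projectivity of the underlying module is preserved, so $U\in\repo(A)$. The converse in (a) is clear since $q^*p^*=(pq)^*$ and $pq$ is trivial. For (b), let $W_0\subset q^*(V)$ be the maximal trivial subobject; set $V_0:=V^H$, which by Lemma \ref{l2}(i) is a $G$-submodule of $V$, and one checks $q^*(V_0)=W_0$ by the maximality of $W_0$ together with the fact that $V^H$ is exactly the $H$-fixed part. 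Some care is needed to see that $V_0$ lands in $\repo(G)$ — i.e. that $V^H$ is a direct summand of $V$ as an $R$-module; this should follow because the inclusion $W_0\hookrightarrow q^*(V)$ is split over $R$ (a trivial $H$-module that is a special subobject), using that the ambient module is projective over the Dedekind ring.

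For the ``if'' direction, assume (a), (b), (c). The goal is to show $H=\ker(p)$; since $q$ is already a closed immersion with $q$ factoring through $\ker(p)$ (because $pq$ is trivial, which is forced by (a) applied to $V=p^*U$), it suffices to show the induced closed immersion $H\hookrightarrow \ker(p)$ is an isomorphism, equivalently $R[\ker p]=R[G]/I_AR[G]\surj R[H]$ is injective, equivalently (by Proposition \ref{hom_sur}/Theorem \ref{th1}(ii) applied in the other direction) that $\repo(\ker p)\to\repo(H)$ is fully faithful with image closed under subobjects — but more efficiently one shows directly that the two Tannakian groups have the same coordinate ring. The cleanest route: by Theorem \ref{th1}(i) applied to $p$, the category $\repo(A)$ is the full subcategory of $\repo(G)$ of objects on which $\ker(p)$ acts trivially; by condition (a) this is exactly the full subcategory of objects on which $H$ acts trivially. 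Now invoke the Tannakian description of a quotient group: $\ker(p)$ and $H$ are both closed subgroups of $G$, and a closed subgroup of $G$ is determined by the tensor ideal / the collection of representations it fixes, together with the subobject structure recorded in (b) and the surjectivity recorded in (c). Conditions (b) and (c) are precisely what is needed to upgrade ``$H$ and $\ker p$ fix the same representations'' to ``$\repo(H)\simeq\repo(\ker p)$ compatibly,'' whence $R[H]\cong R[\ker p]$ by Theorem \ref{thSa}, and the sequence is exact.

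The main obstacle, I expect, is the ``if'' direction, and specifically making rigorous the step from the three categorical conditions to the equality $H=\ker(p)$ of closed subschemes. Over a field this is the classical argument (e.g. Deligne--Milne), where one shows $R[G]/I_AR[G]$ has no nontrivial sub-Hopf-algebra-ideals killing $\repo(H)$; over a Dedekind ring the subtlety is flatness: one must ensure that the relevant modules ($V^H$, the coefficient spaces $\mathsf{Cf}(N)$, the quotients $R[G]/I_AR[G]$) are $R$-flat or can be saturated without losing finiteness, which is exactly the locally-finite pathology flagged in Section \ref{Sect3}. Condition (c) (surjectivity onto $\repo(H)$) is what prevents $H$ from being strictly smaller than $\ker(p)$, and condition (b) (the subobject structure) is what prevents the comparison from failing at non-flat quotients; I would spend most of the effort checking that (b) genuinely supplies the special-subobject hypothesis needed to invoke Proposition \ref{hom_inj}, and that no flatness is lost when passing between $V$, $V^H$, and their images in $R[G]$. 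A secondary technical point is verifying $V^H\in\repo(G)$ (projectivity of the fixed submodule), which I would handle by the splitting argument sketched above, valid because $R$ is Dedekind and the modules in sight are finitely generated and torsion-free.
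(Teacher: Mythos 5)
Your handling of (a) and (b) matches the paper (both reduce to Lemma \ref{l2}; the extra worry about $V^H$ being an $R$-direct summand is unnecessary, since over a Dedekind ring a finitely generated torsion-free submodule of $V$ is automatically projective), but the derivation of (c) is a genuine gap. Proposition \ref{hom_sur} applied to the surjection $R[G]\surj R[H]$ only says that each $W\in\comodf(R[H])$ is a \emph{special subquotient} of $q^*(N)$ for some finite $G$-module $N$, i.e.\ a quotient of an $H$-subcomodule of $q^*(N)$; that subcomodule is in general not a $G$-submodule, so this does not yield $W$ as a quotient of $q^*(V)$ with $V\in\repo(G)$. Indeed (c) is not a formal consequence of $q$ being a closed immersion at all (for a non-normal closed subgroup it can fail; it is tied to the quotient being affine), so the exactness hypothesis must be used. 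The paper proves (c) through the induced-representation functor ${\sf Ind}(W)\cong (W\otimes_R R[G])^H$: faithful flatness of $R[G]$ over $R[A]$ (Theorem \ref{flatness} together with Lemma \ref{l2}) gives ${\sf Ind}(W)\otimes_{R[A]}R[G]\cong W\otimes_R R[G]$, hence ${\sf Ind}$ is exact and faithful; one then shows the counit ${\sf Ind}(W)\to W$ is surjective, and finally picks a finitely generated $G$-submodule of the torsion-free module ${\sf Ind}(W)$ still surjecting onto $W$, dualizing for the subobject statement. None of this appears in your proposal.

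The second gap is the ``if'' direction, which you flag but do not close, and the principle you lean on --- that a closed subgroup of $G$ is determined by the collection of $G$-representations it fixes --- is false in general (over a field, a Borel subgroup of a semisimple group and the group itself act trivially on exactly the same $G$-modules), so the entire burden falls on the step you leave open. The paper's argument is concrete: set $\bar H=\ker p$, so that $i:H\to\bar H$ is a closed immersion and, by the first part of the proof, $\bar H$ also satisfies (a)--(c). Fullness of $i^*:\repo(\bar H)\to\repo(H)$ is proved by sandwiching an $H$-morphism $\varphi: i^*\bar W_0\to i^*\bar W_1$ between a surjection $\bar q^*(V_0)\surj\bar W_0$ and an injection $\bar W_1\inj\bar q^*(V_1)$, viewing the resulting map as an element of $(q^*(V_1)\otimes q^*(V_0)^\vee)^H$, and using (a) and (b) for both $H$ and $\bar H$ to identify these $H$-invariants with the corresponding $\bar H$-invariants; then (c) combined with fullness shows every object of $\repo(H)$ is the image under $i^*$ of an object of $\repo(\bar H)$, which together with $i$ being a closed immersion forces $H\cong\bar H$. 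Your sketch contains no mechanism that would rule out $H$ being a proper closed subgroup of $\ker p$ fixing the same representations; supplying that mechanism --- the invariants comparison powered by (b), and the image argument powered by (c) --- is precisely the content of the proof you would still need to write.
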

\begin{proof}
Assume that $q : H \longrightarrow G$ is the kernel of $p  : G  \longrightarrow A.$ Then (a) and (b) follow from  \ref{l2} (i) and (ii). We prove (c). 

Let ${\sf Ind} : {\sf Rep}(H) \longrightarrow {\sf Rep}(G)$ be the induced representation functor, it is the 
right  adjoint  functor  to  the  restriction  functor  ${\sf Res}  :  {\sf Rep}(G)  \longrightarrow  {\sf Rep}(H)$ that  is
\begin{eqnarray} \label{eqiso}
\xymatrix{{\sf Hom}_G (V, {\sf Ind}(W ))  \ar[r]^{\cong}& {\sf Hom}_H ({\sf Res}(V ), W).}
\end{eqnarray}      
 One has $ {\sf Ind}(W ) \cong ( W \otimes_R R[G] )^H$ , where  $H$ acts on $R[G]$ by the left regular action, \cite[I.3.4]{JC}.  
Notice that the subspace $ ( W \otimes_R R[G] )^H\subset W\otimes_RR[G]$ is invariant under the action of $R[A]$, i.e. it is an $R[A]$ submodule.

Notice that the isomorphism \eqref{eq_PB}
$$
  R[G] \otimes_{R[A]} R[G] \xrightarrow\simeq  R[H] \otimes_R R[G]
$$
is a map of $G$-modules where $G$ acts on the second tensor terms by the right regular action.
Since $R[G]$ is faithfully flat over 
its subalgebra $R[A]$, taking the tensor product with $R[G]$ over $R[A]$ commutes with taking $H$-invariants,  hence we have  
$${\sf Ind}(W ) \otimes_{R[A]} R[G] \simeq
( W \otimes_R R[G] )^H\otimes_{R[A]}R[G]\simeq W \otimes_R R[G]. $$
 This in turns implies that the functor ${\sf Ind}$ is faithfully exact.

Setting $V = {\sf Ind}(W ) $ in \eqref{eqiso}, one obtains a canonical map
 $u_W : {\sf Ind}(W ) \longrightarrow
W $  in  ${\sf Rep}(H)$ which gives back the isomorphism in \eqref{eqiso} as follows: 
$${\sf Hom}_G (V, {\sf Ind}(W )) \ni  h \mapsto u_W \circ  h  \in {\sf Hom}_H ({\sf Res}(V ), W).$$            
The map $u_W$   is  non-zero whenever  $W$  is non-zero. Indeed,  since ${\sf Ind}$ is exact and faithful, ${\sf Ind(W)}$ is  non-zero whenever  $W$ is  non-zero. Thus if were $u_W= 0,$  then \eqref{eqiso} were the zero map for any $V.$ On the other hand, for 
$V = {\sf Ind}(W ), $ the right 
hand side contains the identity map. A contradiction, which  shows that $u_W$ cannot vanish.

We  show now that $u_W$ is  always  surjective.  
Let  $U  =  {\sf im} (u_W ) $ and $T = W/U \in \repf(H).$ We have the following diagram 
\begin{displaymath}
\xymatrix {
0\ar[r]&{\sf Ind}(U) \ar[d]^{u_U} \ar[r] &{\sf Ind}(W ) 
\ar[d]^{u_W }\ar[r] &  {\sf Ind}(T ) \ar[r] \ar[d]^{ u_T  }&0 \\
0\ar[r]&U \ar[r]& W \ar[r] &T \ar[r]&0 \\}
\end{displaymath}
By assumption, the composition  ${\sf Ind}(W ) \twoheadrightarrow {\sf Ind}(T ) \longrightarrow T$  is $0,$ therefore $ {\sf Ind}(T ) \longrightarrow
T$ is a zero map, implying $T  = 0.$ 

  Assume now $W$ is finitely generated projective over $R$.   Then ${\sf Ind}(W )$ is torsion free. Hence ${\sf Ind}(W )$ is the union of its finitely generated $R$-projective modules, we can find a finitely generated $G-$submodule $W_0 (W)$ of ${\sf Ind}(W )$ which still maps 
surjectively on  $W.$ In order to  obtain the statement on the embedding  of $W,$ we 
dualize the map $W_0 (W^\vee) \twoheadrightarrow W^\vee. $

Assume now that (a), (b), (c) are satisfied. Then it follows 
from  (a) 
that  for  $U \in \repo(A)$,  $q^* p^*(U)  \in \repo(H) $ is  trivial.  Hence  $ pq :  H \longrightarrow A$  is  the 
trivial  homomorphism.  Recall  that  by  assumption,  $q$ is  injective, $p$ is  surjective.    
Let $ \overline {q} : \overline {H} \longrightarrow G $ be the kernel of $p.$ Then we have commutative diagram 

\begin{displaymath}
\xymatrix{ H\ar[dr]_{q}\ar[rr]^{i} &&\overline {H} \ar[dl]^{\overline {q}} \\
&G}\longleftrightarrow 
\xymatrix{ \repo(H)  &&\repo(\overline {H}) \ar[ll]_{i^*}  \\
&\repo(G) \ar[ul]^{q^*}\ar[ur]_{\overline {q}^*}.}
\end{displaymath}  

It  remains  to show  that $i$ is surjective. We use Proposition \ref{hom_inj}.
    We first show the functor $i^*$ is fully faithful. The faithfulness is obvious, we show the fullness. Let $\overline W_0,\overline W_1$ be objects in $\repo(\overline H)$, and $\varphi: W_0=i^*(\overline W_0)\to W_1=i^*(\overline W_1)$ be  a morphism of $H$-modules.
Since $\overline H$ is the kernel of $p$, the first part of this proof shows that there exist surjection $\overline q^*(V_0)\surj \overline W_0$ and injection $\overline W_1\inj\overline q^* (V_1)$ where $V_0,V_1$ are objects of $\repo(G)$. Thus $\varphi$ combined with these maps yields a map $\widehat\varphi:q^*(V_0)\to q^*(V_1)$.
$\widehat\varphi$ corresponds to an element of $(q^*(V_1)\otimes q^*(V_0)^\vee)^H$.
  By conditions (a), (b) for $H$ and by the fact that $\overline H$ also satisfies (a), (b), there exists $\psi:\overline q^*(V_0)\to \overline q^*(V_1)$ such that
$$\widehat\varphi=i^*(\psi):
 q^*(V_0)\surj\overline W_0\xrightarrow{\varphi}
\overline W_1\inj q^* (V_1).
$$
This implies that $\varphi=i^*(\overline \varphi)$ for some $\overline \varphi:\overline W_0\to\overline W_1$. Thus $i^*$ is full.

For any $W \in \repo(H),$   by (c) there exist $V_0, V_1$  in $\repo(G)$ 
and  $ \varphi :  q^*(V_0  ) \longrightarrow q^*(V_1)$ such  that  $W= {\sf im} \varphi. $ Since $ i^*$ is full, $\varphi = i^*\overline {\varphi},$ hence  $W \cong i^*({\sf im} \overline {\varphi}).$ Thus  we  have proved that  any  object  in $\repo(H)$ is isomorphic to the image under $i^*$ of an object in $\repo(\overline {H}).$ Together with 
the discussion above this implies that $ \overline {H} \cong H.$          
\end{proof}

\section{Stratified sheaves on a smooth scheme over a Dedekind ring}\label{Sect5}

\subsection{Stratified sheaves on smooth schemes over a Dedekind ring}\label{Sect4.2}

 Let $R$ be  a Dedekind ring, denote $S:={\sf Spec}(R)$. We shall assume that the residue fields of $R$ are all {\em perfect}.
 Let $f:X\to S$ be a smooth morphism with geometrically connected fibers. 
Consider the category $\str(X/S)$ of $\mathcal O_X$-coherent modules over the sheaf $\Diff(X/S)$ of algebras of differential operators on $X/S$. 
This is an abelian tensor category with the unit object being $\mathcal O_X$ equipped with the usual K\"ahler differential. 
We call objects of $\str(X/S)$ stratified sheaves. 

If $R$ is a perfect field then objects of $\str(X/S)$ are locally free as sheaves on $X$ (see \ref{str/k}). 
For the proof of this fact in characteristic 0 see \cite{Katz} and in positive characteristic see \cite{JSa0}.

We assume that $f$ admits a section $\xi:S\to X$.
The pull-back along $\xi$ provides a functor $\xi^*:\str(X/S)\to {\sf Mod}(R)$. The following results are similar to those of dos Santos \cite[Sect~4.2]{JSa}. 
\begin{proposition}\label{pro_santos}
The following claims hold:
\begin{itemize}
\item[(i)] An object of $\str(X/S)$, which is $R$-torsion-free, is flat (and hence is locally free) as an $\mathcal O_X$-module. Consequently, the subcategory of $\mathcal O_X$-locally free objects is closed under taking subobjects.
\item[(ii)] The functor $\xi^*:\str(X/S)\to{\sf Mod}(R)$ is faithful and exact.
\end{itemize}
\end{proposition}
\begin{proof} Since these are local properties, we can assume that $R$ is a discrete valuation ring with a uniformizer $t$ and $X$ is an affine scheme over $R$, $X={\sf Spec} A$. Then the proof of \cite[Lem.~19, Cor.~20]{JSa} can be used. 

(i).  By the local flatness criterion, an object $M$ of $\str(X/S)$ is flat over $\mathcal O_X$ if ${\sf Tor}_1^R(M,\mathcal O_{X_0})=0$ and $M_0:=M/tM$ is flat over $X_0=X\times_{{\sf Spec}(R)}{\sf Spec}(R/tR)$. The second condition is trivially satisfied as $X_0$ is a scheme over a field $R/tR$. The first condition just means $M$ is $t$-torsion free.

(ii) We show that $\xi^*$ is left exact.  Let $m$ be the kernel of $\xi:A\to R$ then $\xi^*$ is the functor tensoring with $A/m=R$. 
Shrinking $X$ if necessarily, we can assume the existence of a regular sequence of generators of $m$, say $x_1,x_2,\ldots, x_n$. It suffices to check that $${\sf Tor}^A_1(M,A/m)$$ vanishes for any $M\in \str(X/S)$. 
Let $M_\tau$ be the $R$-torsion part of $M$ and $M_f:=M/M_\tau$ then $M_f$ is $R$-torsion free hence is $\mathcal O_X$-flat by (i). Thus one is led to check the claim for those $M$, which are $R$-torsion. Such an $M$ has a filtration (which is finite as $M$ is coherent)
$$0=M_0\subset M_1\subset\ldots\subset M_m=M,$$
where $M_i/M_{i-1}$ is killed by  $t$, so that  $M_i/M_{i-1}$ is supported on $X_0$. 
Using the long exact sequence for Tor, one reduces the problem to the case $M$ is supported on $X_0$. Thus $M$ is locally free as a sheaf on $X_0$.
Let $T_j:={\sf Tor}^A_1(M,A/(x_1,\ldots,x_j))$. We will show by induction that $T_j=0$ for $j=1,2,\ldots,n$. To see that $T_1=0$ we consider the exact sequence
$$0\to A\xrightarrow{\cdot x_1}A\to A/(x_1)\to 0,$$
which shows that $T_1$ is the kernel of the multiplication by $x_1$ on $M$, which is 0 as $M$ is locally free over $\mathcal O_{X_0}$. For the induction step, assuming that $T_{j-1}=0$ and considering the short exact sequence
$$0\to A_{j-1}\xrightarrow{\cdot x_j}\to A_{j-1}\to A_j\to 0,$$
$A_j:=A/(x_1,x_2,\ldots,x_j)$ we obtain a short exact sequence
$$0\to T_j\to A_{j-1}\otimes_AM\xrightarrow{\cdot x_j}\to A_{j-1}\otimes_AM.$$
Now $A_{j-1}\otimes_AM$ is a stratified module on the smooth scheme
$${\rm Spec } A/(t,x_1,\ldots,x_{j-1}),$$
hence, as above, we conclude that $T_j=0$.

Finally, to see that $\omega$ is faithful, it suffices to see that $\xi^*(M)=0$ implies $M=0$ for any stratified sheaf on $X$. We follow again the argument above: the torsion-free part is restricted to the generic fiber and  the torsion part is filtered with sucessive quotients supported on closed fiber.\end{proof}

A locally free stratified sheaf is also called {\em stratified bundle}.
 Let $\str^\text{o}(X/S)$ be the subcategory of stratified bundles. This is a rigid tensor category, and by the above proposition it is closed under taking subobjects. 

\begin{lemma}Let $K$ be quotient field of $R$. Then the category  obtained by scalar extension $R\to K$, $\str^{\rm o}(X/S)_K$, is abelian.\end{lemma}
\begin{proof}
It is to define quotients in $\str^{\rm o}(X/S)_K$. We first show the following: for $M\subset N$ in $\str^{\rm o}(X/S)$, let $M^{\rm sat}$ denote the saturation of $M$ in $N$, i.e. the minimal extension of $M$ in $N$ such that $N/M^{\rm sat}$ is $R$-torsion free, then $M$ and $M^{\rm sat}$ are isomorphic as objects in $\str^{\rm o}(X/S)_K$.  
Indeed, since $M^{\rm sat}/M$ is $R$-torsion, as in the proof of Proposition \ref{pro_santos} (ii), there exists a finite filtration for $M^{\rm sat}/M$ such that each successive quotient is killed by a non-zero element of $R$. Hence $M^{\rm sat}/M$ is killed by a single non-zero element $a\in R$. Thus $M^{\rm sat}/M\otimes_RK=0$, that is, $M\otimes_RK\cong M^{\rm sat}\otimes_RK$.

Let now $f:M\to N$ be a morphism in $\str^{\rm o}(X/S)_K$, then there exists $a\in R$ such that $af$ is a morphism in $\str^{\rm o}(X/S)$. The kernel of $f$ is defined to be the kernel of $af$, since $M$ is $R$-torsion free, the definition is independent of the choice of $a$. On the other hand, the co-kernel of $f$ is defined to be the quotient of $N$ by the saturation of ${\sf Im}(af)$. The discussion above shows that this definition is independent of the choice of $a$. Indeed, the saturations of  ${\sf Im}(af)$ and  ${\sf Im}(baf)$ in $N$ are the same for any $b\in R$.
\end{proof}

Based on results of section \ref{Sect2} we make the following definition.
\begin{definition}The fiber functor $\xi^*$ makes $\str^{\rm o}(X/R)$ a Tannakian lattice, its Tannakian group, denoted by $\pi(X/S,\xi)$, is called the relative fundamental group scheme of $X/S$.\end{definition}

Let $M\in\str^\text{o}(X/S)$. Consider the full subcategory $\langle M\rangle_s^\otimes$ of $\str^\text{o}(X/S)$, consisting of special subquotients of direct sums of tensors powers of the form
$$T^{a,b}(M):=M^{\otimes a}\otimes M^{\vee\otimes b},\quad a,b\in\mathbb N.$$
Then $\langle M\rangle_s^\otimes$ is also a Tannakian lattice.  Its Tannakian group scheme $G(M)$ is called the differential Galois group scheme of $M$. This group was first studied in \cite{JSa}.

\subsubsection{}  
We don't know if $\str(X/S)$ is a Tannakian category over $R$, be cause we cannot check if any stratified sheaf is representable as a quotient of a stratified bundle. However we can define the abelian envelope  of $\str^{\rm o}(X/S)$, denoted by
 $\mathcal C=\mathcal C(X/S)$,
 as the full subcategory of $\str(X/S)$ consisting of stratified sheaves which can be represented as quotients of stratified bundles. According to Proposition \ref{pro_santos}, $\mathcal C$ is a (neutral) Tannakian category over $R$. We conclude that $\mathcal C$ is equivalent to the representation category of $\pi(X/S,\xi)$ by means of the fiber functor $\xi^*$. Thus $\mathcal C$ is the abelian envelope of $\str^{\rm o}(X/S)$ in the sense of \ref{s.41}.

\subsubsection{}\label{str/k}
Let $s$ be a closed point of $S$, $k:=k_s=R/p_s$ -- the residue field of $s$, and  let $X_s$ denote the fiber of $f$ at $s$. Consider the category $\str(X_s/k)$. Its objects are automatically locally free as $\mathcal O_{X_s}$-modules, cf. \ref{Sect4.2}.
Thus $\str(X_s/k)$ is an abelian rigid tensor category over $k$. 
%, i.e. they are  vector bundles on $X_s$ (for the proof of this fact in characteristic 0 see \cite{Katz} and in positive characteristic see \cite{JSa0}). 

The fiber of $\str(X/S)$ at $s\in S$ is defined as in Remark \ref{rmk.fiber}, and is denoted by $\str(X/S)_s$. This full subcategory of $\str(X/S)$ is identified with the category of stratified bundle on $X_s$. Indeed, the restriction functor $\str(X/S)\to \str(X_s/k)$ (given by pulling-back along the closed immersion $X_s\to X$) can be identified with the functor which associates to each object $M$ of $\str(X/S)$ the quotient $M/p_sM$. In particular, $\str(X_s/k)$ is naturally a subcategory of $\str(X/S)$, consisting of those objects which are annihilated by $p_s$.

\begin{lemma} $\mathcal C(X/S)_s$ is a full subcategory of $\str(X_s/k)$, closed under taking subquotients.\end{lemma}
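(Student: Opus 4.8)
The plan is to deduce the statement from two ingredients already in place: Proposition~\ref{pro_santos}(i), which says that an $R$-torsion-free object of $\str(X/S)$ is automatically $\mathcal O_X$-locally free, and the identification recalled just before the lemma of the full subcategory of $p_s$-torsion objects of $\str(X/S)$ with $\str(X_s/k)$ --- an identification induced by the exact pull-back functor along the closed immersion $X_s\hookrightarrow X$, hence compatible with kernels, images and cokernels.

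The one substantial step I would carry out is to check that $\mathcal C(X/S)$ is closed under subobjects and quotients inside $\str(X/S)$. Closure under quotients is immediate from the definition, since a quotient of an object that is a quotient of a stratified bundle is again such a quotient. For subobjects, suppose $M\in\mathcal C(X/S)$ and fix a surjection $\phi\colon F\surj M$ in $\str(X/S)$ with $F$ a stratified bundle; given a $\Diff(X/S)$-submodule $N\subset M$, the preimage $F':=\phi^{-1}(N)$ is a $\Diff(X/S)$-submodule of $F$, so it is $R$-torsion-free (as $F$ is), hence $\mathcal O_X$-locally free by Proposition~\ref{pro_santos}(i). Since $\phi$ restricts to a surjection $F'\surj N$, we get $N\in\mathcal C(X/S)$. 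The point of the $\Diff(X/S)$-structure here is precisely that it upgrades ``torsion-free'' to ``locally free'', and this is where Proposition~\ref{pro_santos}(i) does the real work.

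Granting this, both assertions of the lemma become formal. For fullness, $\mathcal C(X/S)$ is by construction a full subcategory of $\str(X/S)$, and $\mathcal C(X/S)_s$ is by definition (Remark~\ref{rmk.fiber}) the full subcategory of objects of $\mathcal C(X/S)$ killed by $p_s$; restricting the identification $\{p_s\text{-torsion objects of }\str(X/S)\}\simeq\str(X_s/k)$ then presents $\mathcal C(X/S)_s$ as a full subcategory of $\str(X_s/k)$. For closure under subquotients, note that the full subcategory of $p_s$-torsion objects of $\str(X/S)$ is itself closed under subobjects and quotients in $\str(X/S)$ (a submodule or a quotient of a $p_s$-torsion module is $p_s$-torsion), so a subquotient formed in $\str(X_s/k)$ of an object $M\in\mathcal C(X/S)_s$ agrees with the corresponding subquotient formed in $\str(X/S)$; by the previous paragraph it lies in $\mathcal C(X/S)$, and being $p_s$-torsion it lies in $\mathcal C(X/S)_s$. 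The only non-formal input is Proposition~\ref{pro_santos}(i), and the one place to be careful is the compatibility of the identification of $p_s$-torsion stratified sheaves on $X/S$ with those on $X_s/k$ with the abelian structure; I do not anticipate a serious obstacle here.
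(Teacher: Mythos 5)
Your proof is correct and follows essentially the paper's own argument: quotients are handled directly from the definition of $\mathcal C(X/S)$, and for subobjects one pulls back along the presenting surjection from a stratified bundle and invokes Proposition \ref{pro_santos}(i) (torsion-free $\Rightarrow$ locally free) to see the preimage is again a stratified bundle. The only difference is organizational --- you first prove closure of $\mathcal C(X/S)$ under subobjects and quotients in $\str(X/S)$ and then intersect with the $p_s$-torsion condition, whereas the paper argues directly on the $p_s$-torsion object --- and the compatibility of the identification of $p_s$-torsion objects with $\str(X_s/k)$ is left implicit in both.
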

\begin{proof}
 This is obvious by definition (see \ref{rmk.fiber}). A stratified sheaf $M_0$ is an object of $\mathcal C(X/S)_s$ if it is annihilated by $p_s$ and can be presented as a quotient of a stratified bundle, say $M$. Thus any quotient of $M_0$ is again in $\mathcal C(X/S)_s$. On the other hand, if $N_0$ is a subobject of $M_0$ then taking the pull-back of $N_0$ along the projection map $M\to M_0$ we get a subobject $N$ of $M$ which surjects onto $N_0$. Since $N$ itself is locally free, we conclude that $N_0$ is in $\mathcal C(X/S)_s$. 
\end{proof}

\begin{remarks} In general we don't know if $M$ can be presented in the from $M_0=M/\pi M$, that is if $M_0$ is the fiber at $s$ of some stratified bundle $M$. The difference between $\mathcal C(X/S)$ and $\str(X/S)$ consists essentially in those stratified sheaves supported in $X_s$ and not representable as a quotient of a stratified bundle. See \ref{complet.dvr} below.\end{remarks}

\subsubsection{} In the rest of this subsection we shall assume that $R$ contains a field $k$ which is mapped isomorphically onto any residue field. Thus $R$ can be see as the coordinated ring on the affine curve $S$ on $k$ (recall that $k$ is assumed to be perfect).

Consider $X$ as a scheme over $k$. A stratified sheaf on $X/k$ is automatically locally free on $X$ and is called an {\em absolute} stratified bundle.
There is a natural ``inflation" functor from $\str(X/k)$ to $\str^{\rm o}(X/S)$: consider an (absolute) stratified bundle on $X/k$ as a (relative) stratified bundle on $X/S$. In the other direction, pulling-back along $f$ yields the functor $\omega_f=f^*:\str(S/k)\to \str(X/k)$. Thus, to summarize, we have the following sequence of tensor functors:
$$\xymatrix{
\str(S/k)\ar[r]^{\omega_f}& \str(X/k)\ar[r]^{{\rm infl}} &\mathcal C(X/S)\ar[r]^{{\rm res}}&\mathcal C(X/S)_s\ar[r]& \str(X_s/k)}.$$

Let $x$ be a $k$-rational point of $X/k$.
The fiber functor at $x$ makes $\str(X/k)$ a neutral Tannakian category. Its Tannakian group is denoted by $\pi(X/k,x)$ and called the fundamental group scheme of $X$ at $x$. Let $s=f(x)$. The functor $\omega_f$ is compatible with the fiber functors at $s$ and $x$. Thus, according to \ref{rmk2.11}, we have a homomorphism of fundamental group schemes $f_*:\pi(X/k,x)\to \pi(S/k,s)$. The restriction functor  $\str(X/k)\to \str(X_s/k)$ is also a tensor functor and is compatible with the fiber functors at $x$, hence induces a homomorphism $\pi(X_s/k,x)\to \pi(X/k,x)$. 

Assume now that $x=\xi(s)$:
$$\xymatrix{
& X\ar[d]_f\\
{\sf Spec}(k)\ar[r]_s\ar[ru]^x&S\ar@/_10pt/[u]_\xi
}$$
 Then the fiber functors $x^*$, $\xi^*$ and $f_s^*$ are compatible in the sense that the following diagram is commutative:
$$\xymatrix{\str(X/k)\ar[r]\ar[rdd]_{x^*}&\mathcal C(X/S)_s\ar[r]\ar[dd]^{\xi^*\otimes_Rk}& \str(X_s/k)\ar[ddl]^{f_s^*}\\
\\
 &{\sf Vect}(k)}$$
Hence it yields a sequence of group scheme homomorphism 
$$\pi(X_s/k,x)\to \pi(X/S,\xi)_s\to \pi(X/k, x)\to \pi(S/k,s).$$

\begin{proposition}  The homomorphism $\pi(X_s/k,x)\to \pi(X/S, \xi)_s$ is surjective.
\end{proposition}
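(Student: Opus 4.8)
The plan is to use the Tannakian description of faithful flatness (Theorem \ref{th1}(1)): the homomorphism $\pi(X_s/k,x)\to \pi(X/S,\xi)_s$ is faithfully flat if and only if the corresponding pullback functor on rigid representation categories is fully faithful with image closed under subobjects. By Remark \ref{rmk.fiber}, $\repo(\pi(X/S,\xi)_s)$ is identified with $\mathcal C(X/S)_s$, and the relevant functor is the restriction functor $\mathcal C(X/S)_s\to \str(X_s/k)=\repf(\pi(X_s/k,x))$. So I must check that this restriction functor is fully faithful onto a subobject-closed image. The preceding Lemma already records that $\mathcal C(X/S)_s$ is a full subcategory of $\str(X_s/k)$ closed under subquotients; fullness of the inclusion is precisely what the word ``full'' in that Lemma provides, and closure under subobjects is the ``closed under subquotients'' part. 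Thus the surjectivity follows once one verifies that the functor in Theorem \ref{th1}(1) matches the inclusion $\mathcal C(X/S)_s\hookrightarrow \str(X_s/k)$ of the Lemma.

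First I would make the identifications explicit. The fiber functor at $x=\xi(s)$ restricted to $\mathcal C(X/S)_s$ is $\xi^*\otimes_R k$, which is the fiber functor $f_s^*$ for $\str(X_s/k)$ under the identification of $\mathcal C(X/S)_s$ with stratified bundles on $X_s$ that are quotients of restrictions of locally free objects of $\str(X/S)$; this compatibility is exactly the commutative triangle displayed just before the Proposition. Consequently, by the functoriality of the Tannakian construction (Remark \ref{rmk2.11}), the group homomorphism $\pi(X_s/k,x)\to \pi(X/S,\xi)_s$ induced by the restriction functor is the one in the statement, and on representation categories it is the inclusion $\mathcal C(X/S)_s\hookrightarrow \str(X_s/k)$.

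Next I would invoke the Lemma: it says this inclusion is full and its image is closed under subquotients, in particular under subobjects. Therefore the hypotheses of Theorem \ref{th1}(1), transported through Remark \ref{rmk.fiber}, are satisfied, and $\pi(X_s/k,x)\to \pi(X/S,\xi)_s$ is faithfully flat, i.e. surjective in the sense of quotient homomorphisms.

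The main obstacle is bookkeeping rather than mathematics: one must be careful that ``fully faithful'' in the sense required by Theorem \ref{th1}(1) really is just fullness of a subcategory inclusion (faithfulness being automatic), and that the closure-under-subobjects condition is met on the nose — this is where the content of the Lemma, whose proof pulls back a subobject of $M_0$ along $M\twoheadrightarrow M_0$ to a locally free subobject of $M$, is used. One should also note that $\pi(X_s/k,x)$ and $\pi(X/S,\xi)_s$ are group schemes of the expected type (affine, flat over $k$) so that Theorem \ref{th1} applies; this is immediate since both are Tannakian groups of Tannakian categories over the field $k$.
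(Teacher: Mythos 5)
Your proposal is correct and follows essentially the same route as the paper: since both group schemes live over the field $k$, the paper invokes the Deligne--Milne surjectivity criterion \cite[Thm.~2.21]{DM} (which is exactly what Theorem~\ref{th1}(1) specializes to over a field) and verifies closure under subobjects by pulling a subobject back along a surjection from a locally free object, i.e.\ the content of the preceding Lemma that you cite. The only cosmetic difference is that the paper repeats that pull-back argument inline rather than quoting the Lemma, and cites \cite{DM} rather than its own Dedekind-ring version of the criterion.
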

\begin{proof} Here, the group schemes are defined over a field. Hence we can use the criterion for surjectivity of Deligne-Milne \cite[Thm.~2.21]{DM}. We show that for each object $M\in\str(X/S)_s$, when considered as object in $\str(X_s/k,x)$ all its subobjects will be an object in $\str(X/S)_s$. This is obvious from the fact that the category $\str^{\rm o}(X/S)$ is closed under taking subobjects. There exists by assumption an $X\in\str^{\rm o}(X/S)$ which surjects on $M$. We have the following pull-back diagram
$$\xymatrix{X\ar@{->>}[r]&M\\
Y\ar@{^(->}[u]\ar@{->>}[r]&N\ar@{^(->}[u]}$$
Since $Y$ is a subobject of $X$, it is itself locally free.
\end{proof}

\subsubsection{}
In \cite{JSa2} dos Santos proved that the following homotopy sequence is exact: 
\begin{equation}\label{eq_santos}\pi(X_s,x)\to \pi(X,x)\to \pi(S,s)\to 1,\end{equation}
provided that $f$ is a proper map. Hence in this case we also have
an exact sequence
$$  \pi(X/S,\xi)_s\to \pi(X/k, x)\to \pi(S/k,s)\to 1.$$
%We do not know whether it is also left exact.

The general Tannakian duality applied to $\xi^*$ and the categories $\str(X/k)$ and $\str(S/k)$ yields the fundamental groupoid schemes $\Pi(X/k,\xi)$ and $\Pi(S/k,\xi)$, and the functor $\omega_f:\str(S/k)\to \str(X/k)$ yields a surjective homomorphism $f_*:\Pi(X,\xi)\to\Pi(S,\xi)$. The kernel of this groupoid homomorphism, is by definition
$$L:=S\times_{\Pi(S,\xi)}\Pi(X,\xi),$$
where the map $S\to \Pi(X,\xi)$ is given by the unit element.
This is a flat group scheme over $S$. On other hand, the inflation functor $\str(X/k)\to \str(X/S)$ induces a homomorphism $\pi(X/S,\xi)\to \Pi(X/k,\xi)$. The following question is motivated by dos Santos' result mentioned above. 
\begin{questions}   Assume that $f:X\to S$ is a smooth, proper map with connected fibers. Is the following sequence exact 
$$\pi(X/S,\xi)\to \Pi(X/k,\xi)\to \Pi(S/k,\xi)\to 1?$$
 
\end{questions}

\subsection{The case of a complete discrete valuation ring} \label{complet.dvr}
Let $A=k[[t]]$ where $k$ is a perfect field, with quotient field $K=k((t))$.
Let $\mathfrak X$ be a smooth connected formal affine scheme over $\Spf A$. Let $X_0$ be the special fiber of $\mathfrak X$ and let $X$ be the generic fiber. 
Assume that $\mathfrak X$ admits an $A$-rational point $\xi$.  
Our aim is to show that the category $\str(\mathfrak X/A)$ is Tannakian.

We will show that $\str^{\rm o}(\mathfrak X/A)$ is subcategory of definition in $\str(\mathfrak X/A)$, that is, any stratified sheaf in $\str(\mathfrak X/A)$ is a quotient of a stratified bundle. 
First we need the following.
\begin{proposition}\label{formal_lift}
The restriction functor from $\str(\mathfrak X/k)$ to $\str(X_0/k)$ is an equivalence. In particular, any exact sequence in $\str(X_0/k)$ can be lifted to an exact sequence in $\str(\mathfrak X/k)$.\end{proposition}
\begin{proof}
If $k$ is of positive characteristic, this is a result of Gieseker \cite[Lemma~1.5]{Gi}. He constructed an explicit lift of a stratified bundle on $X_0/k$ to a stratified bundle on $\mathfrak X/k$ and showed that this lift yields a functor which is quasi-inverse to the restriction functor, thus giving the equivalence.

The case of zero characteristic can be proved using the method of Katz in the proof of \cite[Prop.~8.8]{Katz}. Let $M$ be a stratified module over $\Diff(\mathfrak X/k)$. 
The action of this algebra on $M$ will be denoted as usual by $\nabla$. Since $\mathcal O_{\mathfrak X}$ contains the field $k$, $\Diff(\mathfrak X/k)$ is generated by the derivations, that is a stratification is nothing but a flat connection.  We first show that it is locally free. By means of Proposition \ref{pro_santos} it suffices to show that $M$ is $t$-torsion free. This is a local property on $\mathfrak X$.
 
Let $(x_1,\ldots,x_r,t)$ be local coordinates on $\mathfrak X$. Thus $\partial_{x_i}$'s commute each other and commute with $\partial_t$, and we have
$$\partial_{x_i}(x_j)=\delta_{ij};\quad \partial_{x_i}(t)=\partial_{t}(x_i)=0;\quad \partial_{t}(t)=1.$$
  One considers the $k$-linear operator 
$\displaystyle P:=\sum_{i=0}^\infty\frac{(-t)^i}{i!}\partial_t^i.$
It has the following properties, cf. \cite[Sect.~8]{Katz}: $$P^2=P;\quad P(m)=m \text{  (mod } tM);\quad P(fm)=f(0)P(m),\quad f\in A, m\in M.$$ 
Hence, setting $M^{\nabla_t}:=\ker\nabla(\partial_t)$, we have
\begin{equation}\label{eq.Mt} M^{\nabla_t}=\im P;\quad M=M^{\nabla_t}\oplus tM; \quad M^{\nabla_t}\cong M_0:=M/tM.\end{equation}
Further the map $A\otimes M^{\nabla_t}\to M$,
$f\otimes m\mapsto fm$ is injective, in particular, $M^{\nabla_t}$ is $t$-torsion free.

Assume that $tm=0$ for some $m\in M$. If $m\neq 0$ it has a unique presentation 
$m=t^{k-1}(tm_1+m_0)$, where $k>0$ maximal, $m_0\in M^{\nabla_t}$ (this is due to the completeness of the $t$-adic topology on $\mathcal O_{\mathfrak X}$).  Then we have
$$0=\nabla(\partial_t)^k(tm)=\nabla(\partial_t)^k (t^{k+1}m_1)+k!m_0,$$
(since $\nabla(\partial_t)(m)=0$).
Hence $m_0\in tM$, which implies $m_0=0$, contradiction. Hence $m=0$. Thus $M$ is $t$-torsion free, hence locally free over $\mathcal O_{\mathfrak X}$ and consequently the restriction functor $M\mapsto M_0=M/tM$ is exact. It also implies that $\str(\mathfrak X/k)$ is an abelian rigid tensor category over $k$.

A section of $M$ (as an object of $\str(\mathfrak X/k)$) is horizontal iff (locally) it lies in $M^{\nabla_t}\cong M_0$ and is annihilated by $\nabla(\partial_{x_i})$, and hence iff its image in $M_0$ is a horizontal section of $M_0$ as an object of $\str(X_0/k)$. We conclude that the restriction functor $\str(\mathfrak X/k)\to \str(X_0/k)$ is faithful. 

Conversely, the third isomorphism in  \eqref{eq.Mt} shows that on an open $U$ of $X_0$ (which topologically homeomorphic to $X$), small enough so that local coordinates on it exist, a horizontal section of $M_0|_U$ can be uniquely lifted to a horizontal section of $M|_U$.  Let now $s_0$ be a horizontal section of $M_0\in \str(X_0/k)$. Consider an open covering $(U_\alpha)$ of $X_0$ such that on each $U_\alpha$ there exist local coordinates $(x_i,t)$. Let $s_{0,\alpha}$ be the restriction of $s_0$ on $U_\alpha$. Lift $s_{0,\alpha}$ to a horizontal section $s_\alpha$ of $M$ on $U_\alpha$. The restrictions of $s_\alpha$ and $s_\beta$ on $U_{\alpha\beta}$ agree as they are liftings of the same section. Hence the $s_\alpha$'s glue together to give a horizontal section of $M$ on $X$. Thus the restriction functor is full.

It remains to show that this functor is essentially surjective, that is, each stratified bundle $M_0$ on $X_0/k$ can be lifted to a stratified bundle on $\mathfrak X/k$. We first assume that on $X$ there exist global coordinates $(x_1,\ldots,x_n,t)$ and that $M_0$ is free over $\mathcal O_{X_0}$ with basis $(e_i^0)$. Given this, we will  show that a flat connection on $M_0$ can be lifted to $M=\langle e_i\rangle_{\mathcal O_{\mathfrak X}}$.

Consider the action of the operator $P$ defined above on the algebra $\mathcal O_{\mathfrak X}$. We have
($D:=\nabla(\partial_t)$)
\begin{eqnarray*}P(ab)&=&\sum_i\frac{(-t)^i}{i!}D^i(ab)\\
&=&\sum_i\frac{(-t)^i}{i!}\sum_j{i\choose j}D^j(a)D^{i-j}(b)\\
&=&\sum_j\frac{(-t)^j}{j!}D^j(a)\sum_{i\geq j}\frac{(-t)^{i-j}}{i!}D^{i-j}(b)\\
&=&P(a)P(b).
\end{eqnarray*}
Hence the isomorphism $\varphi:\mathcal O_{\mathfrak X}^{\nabla_t}\to \mathcal O_{X_0}$, induced by $P$, is an isomorphism of algebras. Notice that, since $[\partial_{x_i},\partial_t]=0$ we have $[\partial_{x_i},D]=0$, for all $i$. Thus $\varphi$ commutes with the action of $\partial_{x_i}$.

Let $\psi$ be the inverse of $\varphi$. Assume that the actions of $\nabla(\partial_{x_i})$ on the basis $(e^0_j)$ is given by a set of  matrices $a_i=(a^k_{ij})$:
$$\nabla(\partial_{x_i})(e^0_j)=\sum_ka_{ij}^ke^0_k.$$
The flatness of $\nabla$ is expressed in terms of the Maurer-Cartan equation involving the matices $(a^k_{ij})$ and their partial derivatives in $x_i$'s. This equation is preserved by $\psi$, which means we can lift them to a set of matrices $A_j=(A^k_{ij})$ such that the equations:
$$\nabla(\partial_{x_i})(e_j)=\sum_kA_{ij}^ke_k$$
defines a flat connection on $\mathfrak X/A$.

Finally we simply set   $\nabla(\partial_t)(e_i)=0$. It is straightforward to check that $\nabla(\partial_t)$ commutes with $\nabla(\partial_{x_i})$ using the fact that $\nabla(\partial_t)(A^k_{ij})=0$ as these elements lie in $A^{\nabla_t}$. Thus, we have constructed a flat connection on $M$.

In the general case we consider an open covering of $X_0$, such that on each open, the connection $M_0$ is free and local coordinates exist. Then on each open we can lift $M_0$. As the lift on each open is unique, they glue together to give a lift of $M_0$ on the whole $\mathfrak X$. 
\end{proof}

Notice that if a stratified sheaf $E_0$ on $\mathfrak X/A$ is annihilated by $t$ then it can be considered as a sheaf on $X_0/k$ and hence can be lifted to a stratified bundle on $\mathfrak X/k$, say $E$ and we have an exact sequence
\begin{equation} 0\to  E\stackrel{[t]}\to E\to E_0\to 0\label{eq:lift1}\end{equation}
where $[t]$ denotes the map multiplying by $t$.
Thus in this case $E_0$ is a quotient of $E$ (considered as stratified sheaf on $\mathfrak X/A$).

\begin{proposition}
Each object of $\str(\mathfrak X/A)$ is a quotient of an object of $\str(\mathfrak X/A)^0$. Consequently, $\str(\mathfrak X/A)$ is a Tannakian category.\end{proposition}
\begin{proof}
Let $E$ be an object of $\str(\mathfrak X/A)$. Then the subsheaf $E_t$ consisting of sections annihilated by some power of $t$ is invariant under the stratification. We have an exact sequence
\begin{equation}\label{eq:ext1}0\to E_t\to E\to E_{\rm fr}\to 0,\end{equation}
with $E_{\rm fr}$ a $t$-torsion free stratified sheaf (hence is locally free by the lemma above). There exists a least integer $r$ such that $E_t$ is annihilated by $t^r$. We will use induction on $r$.

For $r=1$, the subsheaf $tE\subset E$ is $t$-torsion free. Indeed, if a section $ts $ in $tE$ is torsion then $s$ is itself torsion, hence is annihilated by $t$. Consider the exact sequence
$$0\to tE\to E\to E/tE\to 0.$$
The sheaf $E/tE$ is in $\str(X_0/k)$, hence can be lifted to a stratified bundle $F$ on $\mathfrak X/k$: $F\to E/tE$. Pull back the above sequence along this map (considered as morphism in $\str(\mathfrak X/A)$), we get the following commutative diagram
$$\xymatrix{
0\ar[r]& tE\ar[r]& E\ar[r]&E/tE\ar[r]& 0\\
0\ar[r]& tE\ar[r]\ar[u]_=& \widehat E\ar[r]\ar[u]& F\ar[r]\ar@{->>}[u]& 0}$$
In particular, the map $\widehat E\to E$ is surjective. But $\widehat E$ is torsion free as the two sheaves
$tE$ and $F$ are locally free. Thus $\widehat E$ is the needed stratified bundle on $\mathfrak X/A$.
 
Let now $E$ be such that $E_t$, the subsheaf of torsion sections, is annihilated by $t^n$, $n>1$. Let $E_0$ be the subsheaf of $E$ of section annihilated by $t$. Then we have exact sequence
$$ 0\to E_0\to E\to E'\to 0,$$
where for $E'$ its torsion part $E'_t$ is annihilated by $t^{n-1}$. By induction we can lift $E'$ to $F'$ and hence, by pulling-back, we can lift $E$:
$$\xymatrix{
0\ar[r]& E_0\ar[r]& E\ar[r]&E'\ar[r]& 0\\
0\ar[r]& E_0\ar[r]\ar[u]_=& F\ar[r]\ar[u]& F'\ar[r]\ar[u]& 0}$$
Since $F'$ is locally free, $E_0$ is the torsion subsheaf of $F$ and we can lift $F$.\end{proof}

\begin{remarks}
Y. Andre has given in \cite[3.2.1.5]{Andre} an example showing that the differential Galois  group of a connection in $\str^{\rm o}(\mathfrak X/A)$ may be of infinite type over $A$. 
\end{remarks}

\begin{appendix}
\section{Tannakian duality for flat coalgebras over Dedekind rings}\label{App2}
\setcounter{subsubsection}{0}
\setcounter{subsection}{1}
In this appendix we give a quick, complete and self-contained proof of Theorem \ref{thSa}.  
First we will recall the notion ind-category of an abelian category. The two equivalent descriptions of the ind-category will play a crucial role in Saavedra's proof. A category $\mathcal{I}$ is called a filtered category if to every pair $i, j$ of objects in $\mathcal{I}$ there exists an object $k$
 such that ${\sf Hom}(i, k)$ and ${\sf Hom}(j, k)$ are both not empty, and for every pair $u, v : i \longrightarrow j,$ there exists a morphism $w: j \longrightarrow k $ such that $wu = wv.$

 \begin{definition}
{\it \textit{Ind-categories.}} Let $\mathcal{C}$ be an abelian category. The category ${\sf Ind}(\mathcal{C})$ consists of  functors $X: \mathcal{I} \longrightarrow \mathcal{C},$ where $\mathcal{I}$ is a filtering category. We usually denote $X_i$ for $X(i), i \in \mathcal{I},$ an write
$$X=\varinjlim_{i\in\mathcal I} X_i.$$
 For two objects $\displaystyle X= \varinjlim_{i\in \mathcal I}X_i$ and $\displaystyle Y = \varinjlim_{j\in\mathcal J}{Y_j } $   their hom-set is defined to be  
 $${\sf  Hom}(X,Y):= \displaystyle \varprojlim_{i\in \mathcal I} \displaystyle \varinjlim_{j\in\mathcal J} {\sf Hom}(X_i, Y_j).\eqno\Box$$
\end{definition}
 Let $\omega: \mathcal{C} \longrightarrow \mathcal{D}$ be a functor. 
The extension of $\omega, {\sf Ind}(\omega): {\sf Ind(\mathcal{C})} \longrightarrow \sf{Ind}(\mathcal{D})$ is defined by
$${\sf Ind}(\omega)(\displaystyle \varinjlim_i {X_i}) := \displaystyle \varinjlim_i {\omega(X_i)}.$$

There is an alternative description of ${\sf Ind}(\mathcal C)$. Denote ${\sf Lex}(\mathcal{C}^{op}, {\mathcal Sets})$ the category of left exact  functors from $\mathcal{C}^{op}$ to the category of sets. For $X= \varinjlim_i X_i$ we define functor 
$$\displaystyle \varinjlim_i h_{X_i}(-):= \displaystyle \varinjlim_i {\sf Hom}(-, X_i) \in {\sf Lex}(\mathcal{C}^{op}, {\mathcal Sets}).$$ 
This yields a functor
$ \sf{Ind(\mathcal{C})} \longrightarrow {\sf Lex}(\mathcal{C}^{op}, {\mathcal Sets})$
which is an equivalence (cf.\cite{SGA4}, I.8.3.3). Recall that the Hom-sets for objects of ${\sf Lex}(\mathcal{C}^{op}, {\mathcal Sets})$ are by definition the sets of natural transformations. For simplicity, we shall use the notation ${\sf Hom}(F,G)$ instead of ${\sf Nat}(F,G)$ for objects of this category.
 
\subsubsection{}  Suppose that $\mathcal{C}$ is an $R$-linear {\em Noetherian abelian} category. Let ${\sf Lex}_R(\mathcal{C}^{op}, {\sf Mod}(R))$ be category of  $R$-linear left exact  functors from $\mathcal{C}^{op}$ to the category of  modules ${\sf Mod}(R)$. Then the natural functor
$$ {\sf Lex}_R(\mathcal{C}^{op}, {\sf Mod}(R)) \xrightarrow{\ \simeq\ }{\sf Lex}(\mathcal{C}^{op}, {\mathcal Sets}) $$ 
is an equivalence  (cf. Gabriel  \cite[II]{Ga}). 
Thus, for an $R$-linear Noetherian abelian category  we have an equivalence
$${\sf Ind}(\mathcal{C})\simeq {\sf Lex}_R(\mathcal{C}^{op}, {\sf Mod}(R)),\quad X=\varinjlim_iX_i\longmapsto \displaystyle \varinjlim_i h_{X_i}(-).$$
 Further the category ${\sf Ind}(\mathcal{C})$ is locally Noetherian and the inclusion $\mathcal C\to {\sf Ind}(\mathcal{C})$ identifies $\mathcal C$ with the full subcategory of Noetherian objects in ${\sf Ind}(\mathcal{C})$, \cite[II,4, Thm.1]{Ga}.
 
 The following are our main examples.
 \begin{example} The category ${\sf Mod}_{\rm f}(R)$ of finitely generated $R$-modules, where $R$ is a Noetherian ring, is a Noetherian category. Its Ind category is precisely the category ${\sf Mod}(R)$ of all $R$-modules. This is obvious.\end{example}
 
\begin{example}\label{ex_comod} Let $L$ be a coalgebra over a commutative ring $R$. Denote by ${\sf Comod}(L)$  the category of right $L$-comodules and by ${\sf Comod}_{\rm f}(L)$ the subcategory of comodules which are finitely generated as $R$-module.  Then:
\begin{itemize}\item[(i)]
 If $L$ is flat over $R$ then ${\sf Comod}(L)$ is an abelian category. In fact, the flatness of $L$ implies that the kernel of a homomorphism of $L$-comodules is equipped with a natural coaction of $L$. In particular, the forgetful functor from ${\sf Comod}(L)$ to ${\sf Mod}(R)$ is exact.  The converse is also true: if the forgetful functor preserves kernels then $L$ is flat over $R$. 
 \item[(ii)] Assume that $L$ is flat over $R$ and $R$ is Noetherian. 
According to Serre \cite[Cor.~2]{Se}  each $L$-comodule is the union of its $R$-finite subcomodules. Consequently, ${\sf Comod}(L)$ is locally Noetherian and $\comodf(L)$ is the full subcategory of Noetherian objects.
 \end{itemize}
\end{example}

Let $\mathcal{C}$ be an $R$-linear abelian category, and $\omega : \mathcal{C} \longrightarrow \modf(R)$  be an $R$-linear exact faithful functor. Suppose that there exists a full subcategory of definition $\mathcal{C}^{\rm o}$ in $\mathcal{C}.$ 
Our aim is to show that there exists a flat $R$-coalgebra $L$ such that  $\omega$ induces an equivalence between  $\comodf(L)$ and $\mathcal{C}$, and between ${\sf Comod}(L)$ and ${\sf Ind}(\mathcal{C})$.

The functor  $\omega$ induces a functor 
$  {\sf Ind}( \mathcal C)\longrightarrow {\sf Mod}(R),$ 
which we, by abuse of language, will denote simply by $\omega$.
Recall that we identify ${\sf Ind}(\mathcal{C})$ with ${\sf Lex}(\mathcal{C}^{op}, {\sf Mod}(R))$, the category of left exact functors on $\mathcal{C}^{op}$ with values in ${\sf Mod}(R)$. The key technique is to use alternatively these two equivalent descriptions of one category.

\subsubsection{}
For  any $R$-algebra $A$, we define functor 
$$ F^A: \mathcal{C}^{op} \longrightarrow {\sf Mod}(A),\quad X \longmapsto {\sf Hom}( \omega(X), A).$$
 Then $F^A$ is an object of $ {\sf Lex}(\mathcal{C}^{op}, {\sf Mod}(R))$.
Set $F:=F^R$. There is a natural $A$-linear transformation $A\otimes F\to F^A$:
$$\theta_X:A\otimes {\sf Hom}(\omega(X),R)\to {\sf Hom}(\omega(X),A),\quad a\otimes f\longmapsto af.$$
   
\begin{lemma} \label{lem_restriction}The $A$-linear transformation $\theta:A\otimes F\to F^A$ given above is an isomorphism. 
\end{lemma}
\begin{proof}   For any $K, G \in {\sf Lex}(\mathcal{C}^{op}, {\sf Mod}(R))$ we denote $K^{\rm o}, G^{\rm o}$ their restrictions to $(\mathcal{C}^{\rm o})^{op}$, respectively. 
We claim that
%From the definition of $\mathcal{C}^{\rm o}$ we have 
\begin{eqnarray}\label{eq1}
%  {\sf Hom}(K,G):=
{\sf Hom}(K,G) \simeq {\sf Hom}(K^{\rm o}, G^{\rm o}).
\end{eqnarray}
Indeed, let $\theta\in {\sf  Hom}(K^{\rm o}, G^{\rm o})$, that is we have a family $\theta_X:K^{\rm o}(X)\to G^{\rm o}$ for $X\in\mathcal C^{\rm o}$ commuting with morphism in $\mathcal C^{\rm o}$. Since each object of $\mathcal C$ can be represented as a cokernel of a morphism $X_1\to X_2$ in $\mathcal C^{\rm o}$, we see that $\theta$ extends uniquely to a natural transformation $K\to G$ (as these functors are left exact on $\mathcal C^{\rm op}$).

For $X\in \mathcal  C^{\rm o}$, $\omega(X)$ is finite projective over $R$, hence  
$$ F^A(X) ={\sf Hom}(\omega(X), A) \simeq {\sf Hom}(\omega(X), R) \otimes A = A \otimes F(X).$$
Therefore, for any $G\in {\sf Lex}(\mathcal{C}^{op}, {\sf Mod}(R))$, we have
\begin{eqnarray}
{\sf  Hom}((F^A)^{\rm o}, G^{\rm o}) = {\sf  Hom}((A\otimes F)^{\rm o}, G^{\rm o})
\end{eqnarray}
 and \eqref{eq1}   yields
\begin{eqnarray}
 {\sf  Hom}(F^A, G) = {\sf  Hom}(A\otimes F, G).
\end{eqnarray}
 So we have $F^A\simeq A \otimes F.$
\end{proof}

We will show that $L:=\omega(F)$ is the coalgebra to be found. To show this, first we will need
\begin{lemma}
 For any  $X\in{\sf Lex}(\mathcal C^{op},{\sf Mod}(R))$ and $R$-algebra $A$ we have the following $A$-linear isomorphism:
\begin{eqnarray}\label{eq2}
{\sf Hom}(X, F^A) \simeq 
 {\sf Hom}_A(A\otimes \omega(X), A)={\sf Hom}_R( \omega(X), A).
\end{eqnarray}
\end{lemma}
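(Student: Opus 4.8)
The plan is to reduce the general statement to the special computation already performed in the previous lemma, namely $F^A\simeq A\otimes F$, together with the defining adjunction-type property of the objects $F^A$ as "represented" functors on $\mathcal C^{op}$. First I would unwind what $\mathrm{Hom}(X,F^A)$ means: by definition of the $\mathrm{Ind}$-category (equivalently, $\mathrm{Lex}_R(\mathcal C^{op},\mathrm{Mod}(R))$), this is the module of natural transformations $X\to F^A$. Writing $X=\varinjlim_i X_i$ with $X_i\in\mathcal C$, we have $\mathrm{Hom}(X,F^A)\simeq\varprojlim_i\mathrm{Hom}(X_i,F^A)$, so it suffices to prove the isomorphism when $X=h_{X_0}=\mathrm{Hom}(-,X_0)$ is the functor represented by an object $X_0\in\mathcal C$, and then pass to the limit (the right-hand side $\mathrm{Hom}_R(\omega(X),A)$ is contravariant and sends the colimit to the corresponding limit, since $\omega$ is extended to $\mathrm{Ind}(\mathcal C)$ by $\omega(\varinjlim X_i)=\varinjlim\omega(X_i)$).

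For $X=h_{X_0}$ with $X_0\in\mathcal C$, the Yoneda lemma gives $\mathrm{Hom}(h_{X_0},F^A)\simeq F^A(X_0)=\mathrm{Hom}(\omega(X_0),A)=\mathrm{Hom}_R(\omega(X_0),A)$, which is exactly the right-hand side since $\omega(h_{X_0})=\omega(X_0)$. The middle term $\mathrm{Hom}_A(A\otimes\omega(X_0),A)$ is identified with $\mathrm{Hom}_R(\omega(X_0),A)$ by the standard extension-of-scalars (tensor–hom) adjunction, and one checks this identification is $A$-linear, where the $A$-module structure on $\mathrm{Hom}(h_{X_0},F^A)$ comes from the $A$-module structure on the target $F^A$ (postcomposition with multiplication by $a\in A$). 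So the whole chain of isomorphisms is $A$-linear and natural in $X_0$, which is what is needed to glue over the filtered system.

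The one genuine point to be careful about — and where I expect the main obstacle — is the passage to the limit for a general $X\in\mathrm{Lex}_R(\mathcal C^{op},\mathrm{Mod}(R))$ that is \emph{not} Noetherian: one must verify that $\omega(X)=\varinjlim_i\omega(X_i)$ is the correct value, that $\mathrm{Hom}_R(-,A)$ turns this filtered colimit into the filtered limit $\varprojlim_i\mathrm{Hom}_R(\omega(X_i),A)$, and that this matches $\varprojlim_i\mathrm{Hom}(X_i,F^A)=\mathrm{Hom}(X,F^A)$ compatibly with the $A$-linear structures on both sides. All of this is formal once the representable case is settled, since both sides are, by construction, continuous (limit-preserving) contravariant functors of $X$ agreeing on representables; but one should remark that the filtered system indexing $X$ may be replaced, without loss of generality, by one consisting of objects of $\mathcal C$ (not merely $\mathcal C^{\mathrm o}$), which is automatic from the identification $\mathrm{Ind}(\mathcal C)\simeq\mathrm{Lex}_R(\mathcal C^{op},\mathrm{Mod}(R))$ recalled earlier. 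With that in hand the proof is essentially Yoneda plus the tensor–hom adjunction plus cofinality/continuity, and involves no hard calculation.
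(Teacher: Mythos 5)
Your proposal is correct and follows essentially the same route as the paper: write $X=\varinjlim_i h_{X_i}$ with $X_i\in\mathcal C$, use Yoneda to identify $\Hom(h_{X_i},F^A)\simeq F^A(X_i)={\sf Hom}_R(\omega(X_i),A)$, and then pass to the (co)limit using that $\omega$ on ${\sf Ind}(\mathcal C)$ commutes with filtered colimits and that ${\sf Hom}_R(-,A)$ converts the colimit into a limit, with $A$-linearity checked at each step. The invocation of $F^A\simeq A\otimes F$ is not actually needed, but this does not affect the argument.
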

 \begin{proof}  Every $X \in {\sf Lex}(\mathcal C^{op},{\sf Mod}(R))$ can be represented as 
 $X= \displaystyle \varinjlim_i h_{X_i}  ( X_i \in \mathcal{C}),$ where $h_{X_i}$ is a functor  over $\mathcal{C}$,  defined by $h_{X_i}(-) := {\sf Hom}_{\mathcal{C}}( -, X_i).$ Hence we have
  \begin{eqnarray*}
  {\sf Hom}(X, F^A) &=& {\sf Hom}( \varinjlim h_{X_i},F^A)\\
                                            & = &\varprojlim {\sf Hom}(h_{X_i}, F^A)\\
                                            & = &\varprojlim F^A (X_i) \\
                                            & = &{\sf Hom}_R( \varinjlim\omega (X_i), A)\\
                                             &= &{\sf Hom}_R(\omega(\varinjlim h_{X_i}), A)\\
                                              &=& {\sf Hom}_R( \omega(X), A).
\end{eqnarray*}
It is easy to see that all isomorphisms are $A$-linear.
\end{proof}
Isomorphism \eqref{eq2} for $A=R$ and $X=F$  reads ${\sf Hom}(F,F)\simeq {\sf Hom}_R(\omega(F),R)$.
We denote $L:=\omega(F)$ and let $\varepsilon:L\to R$ be the map on the right hand side that corresponds to the identity transformation on the left hand side of this isomorphism. The next lemma shows that one can replace the algebra $A$ in \eqref{eq2} by any $R$-module $M$ to get $R$-linear isomorphisms.
\begin{lemma}\label{lem_XM}
There exists a natural $R$-linear isomorphism extending \eqref{eq2}
\begin{equation}\label{eq3}
\Phi_{X,M}:  {\sf Hom}(X,M\otimes F) \simeq   {\sf Hom}_R(\omega(X), M),
\end{equation}
which is given explicitly by 
$$\Phi_{X,M}(f)=  ({\sf id}_M\otimes\varepsilon)\circ \omega(f).$$
\end{lemma}
\begin{proof}
 For any $R$-module $M$, we can make $R\oplus M$ into an $R$-algebra by letting $M$ be an ideal with square null. Hence the isomorphism \eqref{eq3} is a direct consequence of \eqref{eq2}. 
 By definition $\Phi_{F,R}$ is given by 
 $$\Phi_{F,R}(f)=\varepsilon\circ \omega(f).$$ 
Each $R$-linear map $\iota:R\to M$ induces by functoriality the commutative diagram 
$$\xymatrix{
{\sf Hom}(F,F)\ar[d]_{(\iota\otimes {\sf id}_F)\circ -}\ar[rr]^{\varepsilon\circ \omega(-)}&& {\sf Hom}_R(\omega (F),R)\ar[d]^{\iota\circ -}\\
{\sf Hom}(F,M\otimes F)\ar[rr]_{\Phi_{F,M}}&&
{\sf Hom}_A(\omega (F),M)}$$
Now, the identity on $F$ yields the equality: $$\iota\circ\varepsilon=\Phi_{F,M}(\iota\otimes{\sf id}_{\omega(F)}):\omega(F)\to M.$$
Hence, for $m=\iota(1)$, we have  $\Phi_{F,M}(l)=\varepsilon(l)m$,  $l\in\omega(F)$. 
Thus the claim holds for $X=F$. Since the $\omega$ and Hom-functor in the first variant commute with direct limits we conclude that the claim hold of $X=N\otimes F$ for any $R$-module $N$. Now the general case follows from the following diagram
$$\xymatrix{ {\sf Hom}(M\otimes F,M\otimes F)\ar[r]^{\Phi_{F,M\otimes F}}\ar[d]_{(-)\circ f}& {\sf Hom}(M\otimes\omega(F),M)\ar[d]^{(-)\circ\omega(f)}\\
{\sf Hom}(X,M\otimes F)\ar[r]_{\Phi_{X,M}}&{\sf Hom}(\omega(X), M)}$$
applied for the identity of $M\otimes F$:
$$\Phi_{X,M}(f)=\Phi_{F,M}(\id)\circ\omega(f)=(\id_M\otimes\varepsilon)\circ\omega(f).$$
\end{proof}

\begin{proposition}\label{pro_reconstruction}
Let $L:=\omega(F)$. Then it is a coalgebra with $\varepsilon$ being the counit and $\omega$ factors though a functor
$${\sf Ind}(\mathcal C)\to {\sf Comod}(L).$$
\end{proposition}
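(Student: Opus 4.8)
The plan is to extract both the coalgebra structure on $L=\omega(F)$ and the comodule structure on every $\omega(X)$ from the single natural isomorphism $\Phi$ of Lemma~\ref{lem_XM}. For $X\in{\sf Ind}(\mathcal C)$ let $\lambda_X\colon X\to\omega(X)\otimes F$ be the morphism with $\Phi_{X,\omega(X)}(\lambda_X)=\id_{\omega(X)}$, and set $\rho_X:=\omega(\lambda_X)\colon\omega(X)\to\omega(X)\otimes L$; here and below I use that $\omega$, being exact and (as an Ind-extension) cocontinuous, satisfies $\omega(M\otimes Y)=M\otimes\omega(Y)$ for every $R$-module $M$ and $Y\in{\sf Ind}(\mathcal C)$. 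Put $\Delta:=\rho_F\colon L\to L\otimes L$. Unwinding $\Phi_{X,\omega(X)}(\lambda_X)=\id_{\omega(X)}$ through the explicit formula $\Phi_{X,M}(g)=(\id_M\otimes\varepsilon)\circ\omega(g)$ gives at once the counit-type identity $(\id_{\omega(X)}\otimes\varepsilon)\circ\rho_X=\id_{\omega(X)}$; in particular $(\id_L\otimes\varepsilon)\circ\Delta=\id_L$, so one half of the counit law is free.

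The first genuine step is naturality of $\lambda$: for $f\colon X\to Y$ one has $\lambda_Y\circ f=(\omega(f)\otimes\id_F)\circ\lambda_X$. I would prove this by applying the injective map $\Phi_{X,\omega(Y)}$ to both sides. The left side is sent to $\Phi_{Y,\omega(Y)}(\lambda_Y)\circ\omega(f)=\omega(f)$ by naturality of $\Phi$ in its first argument (which, from the explicit formula, reads $\Phi_{X,M}(g\circ f)=\Phi_{Y,M}(g)\circ\omega(f)$). The right side is sent to $(\id_{\omega(Y)}\otimes\varepsilon)\circ(\omega(f)\otimes\id_L)\circ\rho_X=\omega(f)\circ(\id_{\omega(X)}\otimes\varepsilon)\circ\rho_X=\omega(f)$, using $(\id_{\omega(Y)}\otimes\varepsilon)\circ(\omega(f)\otimes\id_L)=\omega(f)\circ(\id_{\omega(X)}\otimes\varepsilon)$ and the identity of the first paragraph. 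Applying $\omega$ turns this into naturality of $\rho$, i.e.\ $\rho_Y\circ\omega(f)=(\omega(f)\otimes\id_L)\circ\rho_X$ — precisely the statement that $\omega(f)$ is a morphism of $L$-comodules.

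Next I would identify $\lambda_{M\otimes F}$ for an $R$-module $M$. Since $\omega(M\otimes F)=M\otimes L$, the candidate $\id_M\otimes\lambda_F\colon M\otimes F\to M\otimes L\otimes F$ is sent by $\Phi_{M\otimes F,\,M\otimes L}$ to $(\id_{M\otimes L}\otimes\varepsilon)\circ(\id_M\otimes\Delta)=\id_M\otimes\big((\id_L\otimes\varepsilon)\circ\Delta\big)=\id_{M\otimes L}$, whence $\lambda_{M\otimes F}=\id_M\otimes\lambda_F$ and so $\rho_{M\otimes F}=\id_M\otimes\Delta$. Feeding $f=\lambda_X$ (with $Y=\omega(X)\otimes F$) into the naturality identity gives $(\id_{\omega(X)}\otimes\lambda_F)\circ\lambda_X=(\rho_X\otimes\id_F)\circ\lambda_X$, and applying $\omega$ yields coassociativity of the coaction, $(\id_{\omega(X)}\otimes\Delta)\circ\rho_X=(\rho_X\otimes\id_L)\circ\rho_X$; the case $X=F$ is coassociativity of $\Delta$. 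Only the second counit law $(\varepsilon\otimes\id_L)\circ\Delta=\id_L$ remains: apply $\Phi_{F,R}$ to $(\varepsilon\otimes\id_F)\circ\rho_F\colon F\to F$, note its value equals $(\varepsilon\otimes\varepsilon)\circ\Delta=\varepsilon\circ(\id_L\otimes\varepsilon)\circ\Delta=\varepsilon=\Phi_{F,R}(\id_F)$, conclude $(\varepsilon\otimes\id_F)\circ\rho_F=\id_F$, and apply $\omega$. Hence $(L,\Delta,\varepsilon)$ is a coalgebra, each $(\omega(X),\rho_X)$ is a right $L$-comodule, and naturality of $\rho$ makes $X\mapsto(\omega(X),\rho_X)$ the desired factorization of $\omega$ through ${\sf Comod}(L)$.

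I expect the main difficulty to be bookkeeping rather than conceptual: one must fix the operation $M\otimes(-)$ on ${\sf Ind}(\mathcal C)$ (as a colimit of copies of the argument, so that it is defined for non-flat $M$ as well) and verify that $\omega$ commutes with it, so that all of $\rho_X$, $\id_M\otimes\Delta$, $\omega(f)\otimes\id_L$ genuinely land in the modules claimed; and throughout one must keep careful track of which tensor slot $\varepsilon$ is being applied to. Beyond that, every step is a short diagram chase through the two mutually inverse descriptions of $\Phi$ supplied by Lemma~\ref{lem_XM}.
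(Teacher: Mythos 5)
Your argument is correct and is essentially the paper's own proof: your $\lambda_X$ is the paper's $\sigma_X$, naturality proved via injectivity of $\Phi$ is the paper's diagram \eqref{eq5}, and feeding $f=\lambda_X$ back in gives exactly the coassociativity diagrams \eqref{eq5b}--\eqref{eq5c}, with the counit identity \eqref{eq_counit} coming from the defining property of $\lambda_X$. If anything you are more careful than the paper, which silently uses $\sigma_{M\otimes F}=\id_M\otimes\sigma_F$ and does not spell out the second counit law $(\varepsilon\otimes\id_L)\circ\Delta=\id_L$; the only blemish is a notational slip in your last step, where the morphism of ${\sf Ind}(\mathcal C)$ should be written $(\varepsilon\otimes\id_F)\circ\lambda_F$ rather than $(\varepsilon\otimes\id_F)\circ\rho_F$.
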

\begin{proof}
Choose $M= \omega(X)$ in \eqref{eq3} we have a morphism $ {\sigma}_X :X \longrightarrow \omega (X) \otimes F$ which
corresponds to the identity element $\id_{\omega(X)}$ under the isomorphism $\Phi_{X,\omega(X)}$ of  Lemma \ref{lem_XM}, thus we have
\begin{equation}\label{eq_counit}
	({\sf id}_{\omega(X)} \otimes \varepsilon) \circ \omega(\sigma_{X}) = {\sf id}_{\omega(X)}.
\end{equation}

 For any morphism $\lambda: X \longrightarrow Y$ in ${\sf Ind}( \mathcal C)$, according to \ref{lem_XM} we have
 the following equalities:
 \begin{eqnarray*}&&
\Phi_{X,\omega(Y)}\left( (\omega(\lambda) \otimes {\sf id}_ {F}) \circ \sigma_X\right)=\omega(\lambda),\\
&&\Phi_{X,\omega(Y)}\left(\sigma_Y \circ \lambda\right)= \omega(\lambda).\end{eqnarray*}
Thus $(\omega(\lambda) \otimes {\sf id}_{F}) \circ \sigma_X= \sigma_Y \circ \lambda,$ i.e, the following diagram commutes:
\begin{equation}\label{eq5}
\xymatrix{
       X  \ar[rr]^{\lambda} \ar[d]_{\sigma_X}  && Y\ar[d]^{\sigma_Y} \\
\omega(X) \otimes F \ar[rr]^{\omega(\lambda) \otimes {\sf id}_{F}} & &\omega(Y) \otimes F.  }\end{equation}
For $Y=\omega(X)\otimes F$ and $\lambda=\sigma_X$, we get
\begin{equation}\label{eq5b}
\xymatrix{
X  \ar[d]_{\sigma_{X}}  \ar[rr]^{\sigma_{X}}&& \omega (X) \otimes F\ar[d]^{{\sf id}_{\omega(X)}
\otimes\sigma_{ F}} \\
 \omega (X) \otimes F \ar[rr]_{\omega(\sigma_X)\otimes \sf id_F} && \omega (X)  \otimes L \otimes F}
\end{equation}
Applying $\omega$ on this diagram we obtain a commutative diagram in ${\sf Mod}(R)$:
 \begin{equation}\label{eq5c}
\xymatrix{
 \omega (X) \ar[d]_{\omega(\sigma_X)}  \ar[rr]^{\omega(\sigma_X)}&& \omega (X) \otimes L\ar[d]^{\sf id\otimes \Delta} \\
 \omega (X) \otimes L \ar[rr]_{\omega(\sigma_X)\otimes{\sf id
_L}}& & \omega (X)  \otimes L \otimes L,}
\end{equation} 
where $\Delta:=\omega(\sigma_F)$. Together with \eqref{eq_counit}, this diagram for $X=F$ gives a coalgebra structure on $L$ with $\Delta$ being the coproduct and hence, for any $X$, it gives a comodule structure of $L$ on $\omega(X)$.
\end{proof}

%\begin{theorem} %\label{thSa}
%Let $R$ be a Dedekind ring and let $\mathcal C$ be an $R$-linear abelian category. Assume that there exist an $R$-linear exact faithful functor $\omega:\mathcal C\to \modf(R)$ and a subcategory of definition $\mathcal C^{\rm o}$ with respect to $\omega$. Then $\omega$ factors though an equivalence $\mathcal C\simeq \comodf(L)$ and the forgetful functor, for some $R$-coalgebra $L$.\end{theorem}
\begin{proof} (of Theorem \ref{thSa}) Let $L$ be defined as in Proposition \ref{pro_reconstruction}.
We consider $\omega$ as a functor $\mathcal C\to \comodf(L)$. It is to show that $\omega$ is an equivalence of category. By definition it is faithful.   To see the fullness, suppose $X, Y \in \mathcal C$ and
 $\alpha : \omega(X) \longrightarrow \omega(Y)$  is a homomorphism of  $L $-comodules, i.e., we have
 $$(\alpha\otimes{\sf id})\circ\omega(\sigma_X)=\omega(\sigma_Y)\circ \alpha:\omega(X)\to \omega(Y)\otimes L.$$ 
Then
 $
\xymatrix{\omega(X)  \ar[r]^{\alpha}  &  \omega(Y)    \ar[r]^{\omega(\sigma_Y)\quad} & \omega(Y) \otimes L }
$
is  the image under $\omega$ of the morphism
$$\xymatrix{ X\ar[r]^{\sigma_X\quad}& \omega(X) \otimes {F} \ar[r]^{\alpha \otimes {\sf id}_{F}}&\omega(Y) \otimes F.}$$

\let\to\longrightarrow

Notice that \eqref{eq5c} (for $X$ replaced by $Y$) yields a split exact sequence 
\begin{equation}\label{eq_split}\xymatrix{0\ar[r]&\omega(Y)\ar[r]^{\omega(\sigma_Y)} &\omega (Y) \otimes L\ar[r]^{\delta\quad}&\omega (Y)  \otimes L \otimes L,}\end{equation}
where the second homomorphism is 
$\delta={\sf id}\otimes\Delta-\omega(\sigma_X)\otimes{\sf id},$
and the splitting is given by ${\sf id}\otimes\varepsilon:\omega(Y)\otimes L\to \omega(Y).$
 This sequence is the similar image under $\omega$ of the sequence coming from \eqref{eq5b}:
$$0\to Y\to  \omega(Y)  \otimes F\to \omega(Y) \otimes L \otimes F.$$
Hence the latter sequence is also exact. On the other hand, it follows from the faithfulness of $\omega$ that the composed map
$$\xymatrix{X\ar[r]^{\sigma_X\quad}& \omega(X)\otimes F\ar[r]^{\alpha\otimes \sf id}& \omega(Y)\otimes F\ar[r]&\omega(Y)\otimes L\otimes F}$$
is the zero morphism (since its image under $\omega$ is zero by means of \eqref{eq5c} and the fact that $\alpha$ is a homomorphism of $L$-comodules). Consequently, the morphism
$\xymatrix{X\ar[r]^{\sigma_X\quad}& \omega(X)\otimes F\ar[r]^{\alpha\otimes \sf id}& \omega(Y)\otimes F}$ factor through a morphism $f:X\to Y$ and the morphism $\sigma_Y$. Applying $\omega$ on the composition of these maps we conclude $\omega(f)=\alpha$, as $\omega(\sigma_Y)$ is injective. Thus $\omega$ is full.

It remains to show that $\varphi$ is essentially surjective. For any $L$-comodule $(E,\rho_E)$ let $E^{\rm o} \in  C$ be such that the sequence
$$0\to E^{\rm o}\to  E  \otimes F\xrightarrow{\quad\delta} E \otimes L \otimes F.$$
is exact, where $\delta=\rho_E\otimes{\sf id}-{\sf id}\otimes\sigma_F$.
Applying $\omega$ to this sequence and comparing with \eqref{eq_split} we conclude that $\omega(E^{\rm o})=E$.
 
Thus $\omega:\mathcal C\to \comodf(L)$ is an equivalence of categories. Thus the forgetful functor $\comodf(L)\to {\sf Mod}(R)$ is exact, hence $L$ is flat over $R$.
\end{proof}

\begin{remarks} \label{coend} (i)
Under  the equivalence of Theorem \ref{thSa}, $L$, with the right coaction of itself given by the coproduct, corresponds to $F$. Indeed, this follows from the natural isomorphism
$${\sf Hom}^L(E,L)\simeq {\sf Hom}_R(E,R),\quad f\mapsto \varepsilon\circ f.$$

(ii) There is another way to determine $L$ from the category of its comodules as follows.  We claim that there is a natural isomorphism
\begin{equation}\label{eq_coend}{\sf Nat}(\omega,\omega\otimes M)\simeq {\sf Hom}_R(L,M),\end{equation}
for any $R$-module $M$. Indeed, we have 
$${\sf Hom}_R(L,M)\simeq {\sf Hom}(F,F\otimes M)\simeq {\sf Hom}({\sf Hom}(\omega,R),{\sf Hom}(\omega,R)\otimes M).$$
By means of \eqref{eq1}, it suffices to show the isomorphism
$${\sf Nat}(\omega(X),\omega(X)\otimes M)\simeq {\sf Hom}({\sf Hom}(\omega(X),R),{\sf Hom}(\omega(X),R)\otimes M)$$
for any $X\in \mathcal C^{\rm o}$. Since for such $X$, $\omega(X)$ is finitely generated projective over $R$, the last isomorphism is obvious.
$L$ is usually referred to as the Coend of $\omega$, denoted $\coend (\omega)$.

(iii) If $\mathcal C=\comodf(L)$ and $\omega$ is the forgetful functor from $\mathcal C$ to ${\sf Mod}(R)$, then the isomorphism \eqref{eq_coend} implies that $\coend (\omega)\simeq L$. Thus a flat coalgebra over $R$ can be {\em reconstructed} from the category of its comodules.\hfill $\Box$\end{remarks}
\begin{remarks}\label{rmk2.11}
Let $(\mathcal C,\omega)$ and $(\mathcal C',\omega')$ be two categories satisfying the condition of Theorem \ref{thSa} and let $\eta:\mathcal C\to \mathcal C'$ be an $R$-linear functor such that $\omega'\eta=\omega$. Then $\eta$ induces a coalgebra homomorphism $f:L\to L'$. This can be seen from \eqref{eq_coend} as follows. The coaction of $L'$ on $\omega'(X')$ defines a natural transformation $\delta':\omega'\to \omega'\otimes L'$. Combine this with $\eta$ we obtain a natural transformation $\delta:\omega\to\omega\otimes L'$. Thus \eqref{eq_coend} yields a linear map $L\to L'$, which satisfies the following commutative diagram:
$$\xymatrix{
\omega(X)\ar[r]^{\delta}\ar[rd]_{\delta'}&\omega(X)\otimes L\ar[d]^{{\sf id}\otimes f}\\
&\omega(X)\otimes L'} $$ $$ \eqno\Box$$ 
\end{remarks}

\end{appendix} 
\section*{Acknowledgment}
The second named author would like to thank H. Esnault and J.P. dos Santos for their interests in the work and very helpful discussions. He would also like to express his gratitude to J.-P. Serre for explaining him about flat coalgebras.

\end{document}